\documentclass[11pt]{article}

\usepackage[a4paper,margin=1in]{geometry}
\usepackage{amsmath,amssymb,amsthm,mathtools}
\usepackage{graphicx}
\usepackage{flafter}
\usepackage{float}
\usepackage{booktabs}
\usepackage{enumitem}
\usepackage{hyperref}
\usepackage{microtype}
\usepackage{authblk}

\hypersetup{
 colorlinks=true,
 linkcolor=black,
 citecolor=black,
 urlcolor=black
}

\newtheorem{assumption}{Assumption}
\newtheorem{theorem}{Theorem}

\newtheorem{corollary}{Corollary}
\newtheorem{remark}{Remark}

\newcommand{\R}{\mathbb{R}}
\newcommand{\dd}{\,\mathrm{d}}

\newcommand{\D}{\mathrm{D}}
\newcommand{\abs}[1]{\left\lvert #1\right\rvert}
\newcommand{\norm}[1]{\left\lVert #1\right\rVert}
\newcommand{\sdet}[1]{\sqrt{\det\left(#1\right)}}
\newcommand{\given}{\,|\,}
\newcommand{\detcorr}{\texttt{detcorr}}

\title{Constraint residuals, graph posteriors, and determinant-corrected full-space targets in Bayesian inverse problems}

\author{Jonathon Cottom}
\author{Emilia Olsson\thanks{Email: k.i.e.olsson@uva.nl}}

\affil{Institute for Theoretical Physics, University of Amsterdam, Science Park 904, 1098 XH, Amsterdam, the Netherlands}
\affil{Advanced Research Center for Nanolithography, Science Park 106, 1098 XG Amsterdam, The Netherlands}

\date{\today}

\begin{document}

\maketitle

\begin{abstract}
Bayesian inverse problems constrained by state equations are often sampled in a full parameter-state space by penalising the residual, rather than in a reduced space where the state is eliminated. We show that these formulations are not automatically equivalent as posterior measures. For finite-dimensional discretisations of equality-constrained inverse problems, assume the state equation \(c(\theta,u)=0\) has a unique solution \(u=G(\theta)\) and nonsingular state Jacobian \(\D_u c\). The reduced posterior, its graph lift, and the zero-noise residual posterior are then distinct. A local change of variables shows that an uncorrected Gaussian residual penalty converges, after marginalisation over \(u\), to the reduced density multiplied by \(\abs{\det \D_u c(\theta,G(\theta))}^{-1}\). Thus algebraically equivalent residuals can define the same feasible set but different limiting posteriors. We derive determinant corrections for unweighted, weighted, and rescaled residual penalties that have the graph-lifted reduced posterior as their hard-constraint limit. The result separates feasibility from posterior calibration: driving the residual to zero is not sufficient for exact sampling of the graph-lifted reduced posterior unless the sampling or correction step targets the corresponding corrected density.
\end{abstract}

\noindent\textbf{Keywords:} Bayesian inverse problems; constrained posterior; augmented Lagrangian; ADMM; coarea formula; manifold MCMC; sequential Monte Carlo; Stein variational gradient descent.


\section{Introduction}

Many inverse problems in the physical sciences are constrained by a model equation. After discretisation, a typical problem has an unknown parameter \(\theta\in\Theta\subset\R^p\), a state variable \(u\in U\subset\R^q\), observations \(y\), and a state equation
\begin{equation}
 c(\theta,u)=0. \label{eq:constraint_intro}
\end{equation}
Examples include elliptic and parabolic coefficient inverse problems, electromagnetic and acoustic inverse scattering, full waveform inversion, inverse transport, and kinetic or rate-equation models \cite{Tarantola1984,Tarantola2005,VirieuxOperto2009}. In the standard reduced Bayesian formulation, one solves \eqref{eq:constraint_intro} for \(u=G(\theta)\) and samples
\begin{equation}
 \pi_{\rm red}(\theta\given y)
 \propto
 \pi_0(\theta) L(y\given \theta,G(\theta)), \label{eq:reduced_intro}
\end{equation}
where \(\pi_0\) is the prior density and \(L\) is the likelihood. This reduced formulation is conceptually clean and aligns with the classical Bayesian inverse-problem framework \cite{KaipioSomersalo2005,Stuart2010,CalvettiKaipioSomersalo2014,DashtiStuart2017}, but it can be difficult to sample when each likelihood evaluation requires a full PDE solve or when the inverse problem is highly ill conditioned \cite{BuiThanhGhattasMartinStadler2013, FlathWilcoxAkcelikHillVanBloemenWaandersGhattas2011, MartinWilcoxBursteddeGhattas2012,PetraMartinStadlerGhattas2014, CuiMartinMarzoukSolonenSpantini2014,CuiLawMarzouk2016}.

Full-space and dual-space formulations introduce \(u\) as an inference variable and enforce \eqref{eq:constraint_intro} by a penalty, an augmented Lagrangian, a splitting scheme, or a constrained dynamics. These methods are natural from the point of view of deterministic all-at-once and PDE-constrained optimisation \cite{HaberAscher2001,HinzePinnauUlbrichUlbrich2009,BorziSchulz2012,BenziGolubLiesen2005,vanLeeuwenHerrmann2016}, and they can improve conditioning by relaxing the state equation while the algorithm is far from a feasible solution. They are also increasingly used as building blocks for uncertainty quantification: for example, function-space, Riemannian, and manifold MCMC methods exploit PDE-induced geometry \cite{BuiThanhGirolami2014,CotterRobertsStuartWhite2013, BeskosPinskiSanzSernaStuart2011,HairerStuartVollmer2014,Law2014, BuiThanhNguyen2016,GirolamiCalderhead2011, BeskosGirolamiLanFarrellStuart2017,DiaconisHolmesShahshahani2013, ByrneGirolami2013,ZappaHolmesCerfonGoodman2018, LelievreRoussetStoltz2019,GrahamThieryBeskos2022}, ADMM-type splitting has been developed for MCMC \cite{EcksteinBertsekas1992,BoydParikhChuPeleatoEckstein2011,ParikhBoyd2014,VonoPaulinDoucet2022}, and particle variational methods such as Stein variational gradient descent (SVGD) have been coupled with constrained or augmented-Lagrangian updates \cite{LiuWang2016,JiaLiMeng2022, SiahkoohiAghazadeGholami2026,AghazadeSiahkoohiGholami2026}.

This paper concerns a target-measure issue that is independent of any single algorithm. Driving the residual \(c(\theta,u)\) to zero is necessary for a hard-constraint limit, but it is not by itself enough to determine a Bayesian posterior measure. The feasible set
\[
 \Gamma = \{(\theta,u): c(\theta,u)=0\}
\]
is unchanged if one replaces \(c\) by \(A(\theta)c\), where \(A(\theta)\) is any invertible matrix. Such residual transformations are optimisation-equivalent: they define the same feasible points. They are not automatically posterior-equivalent, because a residual penalty also specifies a reference volume in the residual coordinates. A full-space Bayesian formulation must therefore answer a modelling question before it answers an algorithmic one: should the target be the reduced posterior lifted to the graph \(u=G(\theta)\), the zero-noise posterior for an observed residual, or another constrained measure?

The paper distinguishes three objects that are often conflated. The first is the reduced posterior on \(\theta\), obtained by solving the state equation and evaluating the likelihood at \(u=G(\theta)\). The second is the graph-lifted reduced posterior on \(\Gamma\), which is the push-forward of the reduced posterior under \(\theta\mapsto(\theta,G(\theta))\). The third is the zero-noise residual posterior obtained by placing a small-noise likelihood on the residual coordinates \(c(\theta,u)\) in the ambient full space. These measures live on the same feasible set in the hard-constraint limit, but they need not induce the same distribution over \(\theta\).

The determinant factor derived below is not presented as a new coarea formula. The coarea formula, disintegration, and finite-dimensional change-of-variables identities are classical. The contribution is to show that this classical volume factor changes the limiting posterior targeted by common full-space residual-penalty formulations in Bayesian inverse problems, and to turn that observation into corrected targets, weighted-residual formulas, and reproducible diagnostics.

We do not argue against augmented-Lagrangian, splitting, ADMM, SMC, MCMC, or particle variational methods. On the contrary, these methods are useful once the target measure is specified. They can provide proposal geometry, preconditioning, tempering paths, initialisation strategies, and variational approximations. Posterior correctness, however, comes from an invariant transition, a valid sequential correction, or a clearly declared variational approximation to a specified target. Residual convergence and posterior calibration must therefore be monitored separately.

The main result is simple. Suppose that \(c(\theta,u)=0\) has a unique solution \(u=G(\theta)\) and that \(\D_u c\) is nonsingular. Let
\begin{equation}
 r(\theta,u) = \pi_0(\theta)L(y\given\theta,u). \label{eq:r_def}
\end{equation}
Then the \(\theta\)-marginal of the naive penalty posterior
\begin{equation}
 \pi_\rho(\theta,u)
 \propto
 r(\theta,u)
 \exp\!\left[-\frac{\rho}{2}\norm{c(\theta,u)}^2\right]
 \label{eq:naive_penalty_intro}
\end{equation}
converges, as \(\rho\to\infty\), to a density proportional to
\begin{equation}
 r(\theta,G(\theta))\,
 \abs{\det\D_u c(\theta,G(\theta))}^{-1}. \label{eq:naive_limit_intro}
\end{equation}
This is generally not the reduced posterior. To recover the graph-lifted reduced posterior from a full-space penalty, one should instead use
\begin{equation}
 \widetilde{\pi}_\rho(\theta,u)
 \propto
 r(\theta,u)
 \abs{\det\D_u c(\theta,u)}
 \exp\!\left[-\frac{\rho}{2}\norm{c(\theta,u)}^2\right].
 \label{eq:corrected_penalty_intro}
\end{equation}
The determinant correction in \eqref{eq:corrected_penalty_intro} is the finite-dimensional Jacobian needed to make the full-space residual penalty converge to the same parameter posterior as the reduced formulation.

The constructive correction is accompanied by a sampler-agnostic software package, \detcorr, which evaluates the determinant term, assembles corrected finite-penalty log densities, and emits diagnostics that distinguish constraint feasibility from posterior calibration. The package is not needed for the mathematical statements below, but it makes the residual-scaling tests and the benchmark in Section~\ref{sec:pde_protocol} directly reproducible.

\subsection*{Contributions}

The paper makes four contributions.
\begin{enumerate}[leftmargin=*]
 \item We identify a target-measure ambiguity in full-space Bayesian inverse-problem penalties by distinguishing the reduced posterior, its graph lift to the feasible manifold, and the zero-noise residual posterior induced by a penalty on \(c\).
 \item We prove a finite-dimensional penalty-limit theorem, under explicit residual-coordinate, tail, domination, and normalisation assumptions, showing how the naive penalty posterior differs from the graph-lifted reduced posterior by a state-Jacobian volume factor.
  \item We derive corrected full-space targets for the graph-lifted reduced posterior, including the weighted-residual correction, and distinguish hard-constraint invariance from exact finite-\(\rho\) invariance under consistently transformed residual weights.
 \item We give sampler-agnostic SMC, MCMC, manifold, and particle-method templates in which augmented-Lagrangian and ADMM geometry is used as proposal, preconditioning, tempering, initialisation, or variational machinery, and we validate the residual-scaling invariance and elliptic benchmark through code-generated diagnostics.
\end{enumerate}

The results are stated for finite-dimensional discretisations. This is the natural level at which many computational algorithms are implemented. Infinite-dimensional versions require an additional choice of reference measure and a careful treatment of determinants, discretisation limits, and possible singularity of the state map; these issues are discussed in Section~\ref{sec:discussion}.

\section{Reduced, graph-lifted, and residual posteriors}
\label{sec:posteriors}

Let \(\Theta\subset\R^p\) and \(U\subset\R^q\) be open sets. We assume that \(\theta\) is the inverse parameter and \(u\) is the state. The data likelihood is represented by a nonnegative density \(L(y\given\theta,u)\), and the prior density of \(\theta\) is \(\pi_0(\theta)\). Define
\[
 r(\theta,u)=\pi_0(\theta)L(y\given\theta,u).
\]
Throughout this finite-dimensional full-space construction, the ambient reference measure is \( \dd\theta\,\dd u \). Thus \(r\) is the base density with respect to \( \dd\theta\,\dd u \), with no additional state reference density unless one is explicitly included in \(r\). If a non-flat state reference density is intended, it should be absorbed into \(r(\theta,u)\); the determinant statements below then apply to that enlarged base density.

\begin{assumption}[Well-posed state equation]
\label{ass:state}
For every \(\theta\in\Theta\), the equation \(c(\theta,u)=0\) has a unique solution \(u=G(\theta)\). The map \(c:\Theta\times U\to\R^q\) is continuously differentiable, \(\D_u c(\theta,G(\theta))\) is nonsingular for all \(\theta\), and \(G\) is continuously differentiable.
\end{assumption}
This is the regular single-branch setting supplied by the implicit function theorem and the submanifold calculus used below \cite{Lee2013}. The single-branch assumption is used only to avoid an additional modelling choice over branches. If finitely many nonsingular branches are present, the same local calculation gives a branch sum; this case is recorded in Appendix~\ref{app:multiple_roots}.

Under Assumption~\ref{ass:state}, the reduced posterior is
\begin{equation}
 \pi_{\rm red}(\theta\given y)
 =
 \frac{r(\theta,G(\theta))}{Z_{\rm red}},
 \qquad
 Z_{\rm red}=\int_\Theta r(\theta,G(\theta))\dd\theta .
 \label{eq:red_posterior}
\end{equation}
The corresponding graph-lifted posterior is the push-forward of \(\pi_{\rm red}(\theta\given y)\dd\theta\) under the map
\begin{equation}
 \Phi(\theta)=(\theta,G(\theta)).
\end{equation}
Equivalently, for any bounded test function \(\varphi\),
\begin{equation}
 \int_\Gamma \varphi(\theta,u)\,\pi_\Gamma(\dd\theta,\dd u)
 =
 \int_\Theta \varphi(\theta,G(\theta))\,
 \pi_{\rm red}(\theta\given y)\dd\theta .
 \label{eq:graph_pushforward}
\end{equation}
This is the full-space posterior that exactly represents the reduced Bayesian inverse problem.

The same measure can be written, using standard surface-measure and coarea notation \cite{Federer1969,EvansGariepy2015}, as a density with respect to surface measure on
\begin{equation}
 \Gamma = \{(\theta,u)\in\Theta\times U: c(\theta,u)=0\}.
\end{equation}
Let \(\dd\sigma_\Gamma\) denote the Euclidean surface measure on \(\Gamma\), and let
\begin{equation}
 J_c(\theta,u)=\D_{(\theta,u)}c(\theta,u)=\begin{bmatrix}\D_\theta c(\theta,u)&\D_u c(\theta,u)\end{bmatrix}.
\end{equation}
On \(\Gamma\), differentiating \(c(\theta,G(\theta))=0\) gives
\begin{equation}
 \D_\theta c(\theta,G(\theta)) + \D_u c(\theta,G(\theta))\D G(\theta)=0.
 \label{eq:implicit_derivative}
\end{equation}
The surface element associated with the graph parametrisation is
\begin{equation}
 \dd\sigma_\Gamma
 =
 \sdet{I_p + \D G(\theta)^T\D G(\theta)}\,\dd\theta .
 \label{eq:surface_element}
\end{equation}
Using \eqref{eq:implicit_derivative}, one obtains
\begin{equation}
 \sdet{J_cJ_c^T}
 =
 \abs{\det \D_u c}\,
 \sdet{I_p + \D G^T\D G}
 \qquad\text{on }\Gamma .
 \label{eq:gram_identity}
\end{equation}
Hence the graph-lifted reduced posterior has surface density
\begin{equation}
 \frac{\dd\pi_\Gamma}{\dd\sigma_\Gamma}(\theta,u)
 =
 \frac{1}{Z_{\rm red}}
 r(\theta,u)
 \frac{\abs{\det\D_u c(\theta,u)}}{\sdet{J_c(\theta,u)J_c(\theta,u)^T}},
 \qquad (\theta,u)\in\Gamma .
 \label{eq:graph_surface_density}
\end{equation}

Equation \eqref{eq:graph_surface_density} should be compared with the measure obtained by formally conditioning an ambient density \(r(\theta,u)\dd\theta\dd u\) on the event \(c(\theta,u)=0\), a distinction closely related to conditioning by disintegration \cite{ChangPollard1997}. Using the coarea formula \cite{Federer1969,EvansGariepy2015}, the distribution with density proportional to
\begin{equation}
 r(\theta,u)\delta(c(\theta,u))
\end{equation}
has surface density
\begin{equation}
 \frac{\dd\pi_{\rm res}}{\dd\sigma_\Gamma}(\theta,u)
 \propto
 r(\theta,u)
 \frac{1}{\sdet{J_c(\theta,u)J_c(\theta,u)^T}}.
 \label{eq:residual_surface_density}
\end{equation}
This is the zero-noise residual posterior. It is a legitimate Bayesian target if the residual itself is modelled as an observed noisy quantity. It is not, in general, the graph-lifted reduced posterior. The two surface densities differ by the factor \(\abs{\det\D_u c}\). The factor \(\sdet{J_c(\theta,u)J_c(\theta,u)^T}^{-1}\) is the classical coarea/Fixman surface-density factor for the residual-noise constrained target \cite{Fixman1974,ZappaHolmesCerfonGoodman2018,XuZengPaisleyZhao2026}. The graph-lifted reduced posterior differs from this residual-noise target by the additional state-Jacobian factor \(\abs{\det \D_u c}\).

\begin{remark}[A modeling distinction, not a contradiction]
The graph posterior and the residual posterior answer different questions. The graph posterior represents the usual reduced inverse problem in which \(u\) is a deterministic state determined by \(\theta\). The residual posterior represents a model in which the residual \(c(\theta,u)\) is itself observed with small noise in the ambient \((\theta,u)\)-space. Neither is intrinsically wrong. The point is that they should not be interchanged without accounting for the induced reference measure.
\end{remark}

For the theorem, write
\[
 J_u(\theta,u)=\abs{\det \D_u c(\theta,u)}.
\]
The following assumption is a finite-dimensional sufficient condition for the Laplace/coarea limit used below. It replaces informal phrases such as ``exponentially negligible'' and ``uniformly regular local coordinates'' by conditions that can be checked or deliberately strengthened in examples.

\begin{assumption}[Uniform residual coordinates and dominated tails]
\label{ass:uniform_residual_coordinates}
Let \(r:\Theta\times U\to[0,\infty)\) be measurable. There exist \(\delta>0\) and a measurable family of neighbourhoods \(N_\theta\subset U\), with \(G(\theta)\in N_\theta\), such that the following conditions hold.

\begin{enumerate}[leftmargin=*]
 \item \textbf{Uniform local residual coordinates.}
 For every \(\theta\in\Theta\), the map
 \[
  u\mapsto c(\theta,u)
 \]
 is a \(C^1\)-diffeomorphism from \(N_\theta\) onto \(B_\delta(0)\subset\R^q\).
 Its inverse is denoted by
 \[
  \psi_\theta:B_\delta(0)\to N_\theta,
  \qquad
  \psi_\theta(0)=G(\theta).
 \]
 The tube
 \[
  \mathcal N=\{(\theta,u):u\in N_\theta\}
 \]
 is measurable.

 \item \textbf{Regularity of the inverse coordinates.}
 The map \((\theta,v)\mapsto \psi_\theta(v)\) is measurable on
 \(\Theta\times B_\delta(0)\). For almost every \(\theta\),
 \[
  r(\theta,\psi_\theta(v))\to r(\theta,G(\theta))
  \quad\text{and}\quad
  J_u(\theta,\psi_\theta(v))\to J_u(\theta,G(\theta))
  \quad\text{as }v\to0.
 \]
 Moreover \(J_u(\theta,\psi_\theta(v))>0\) for all \(v\in B_\delta(0)\).

 \item \textbf{Negligible mass outside the tube.}
 As \(\rho\to\infty\),
 \begin{equation}
  \rho^{q/2}
  \int_\Theta\int_{U\setminus N_\theta}
  r(\theta,u)
  \exp\!\left[-\frac{\rho}{2}\norm{c(\theta,u)}^2\right]
  \dd u\,\dd\theta
  \longrightarrow 0,
  \label{eq:assumption_tail_uncorrected}
 \end{equation}
 and
 \begin{equation}
  \rho^{q/2}
  \int_\Theta\int_{U\setminus N_\theta}
  r(\theta,u)J_u(\theta,u)
  \exp\!\left[-\frac{\rho}{2}\norm{c(\theta,u)}^2\right]
  \dd u\,\dd\theta
  \longrightarrow 0 .
  \label{eq:assumption_tail_corrected}
 \end{equation}

 \item \textbf{Dominated convergence after residual scaling.}
 There exist \(\rho_0>0\) and integrable functions
 \(H_{\rm res},H_{\rm graph}\in L^1(\Theta\times\R^q)\) such that, for all
 \(\rho\ge \rho_0\),
 \begin{align}
 &r(\theta,\psi_\theta(z/\sqrt{\rho}))
 J_u(\theta,\psi_\theta(z/\sqrt{\rho}))^{-1}
 \exp\!\left[-\frac{1}{2}\norm{z}^2\right]
 \mathbf 1_{\{\norm{z}<\delta\sqrt{\rho}\}}
 \le H_{\rm res}(\theta,z),
 \label{eq:assumption_dom_uncorrected}\\
 &r(\theta,\psi_\theta(z/\sqrt{\rho}))
 \exp\!\left[-\frac{1}{2}\norm{z}^2\right]
 \mathbf 1_{\{\norm{z}<\delta\sqrt{\rho}\}}
 \le H_{\rm graph}(\theta,z).
 \label{eq:assumption_dom_corrected}
 \end{align}

 \item \textbf{Finite nonzero limiting normalising constants.}
 The limiting constants
 \begin{equation}
  Z_{\rm res}
   =
  \int_\Theta
  r(\theta,G(\theta))
  J_u(\theta,G(\theta))^{-1}
  \dd\theta
  \label{eq:Z_res_explicit}
 \end{equation}
 and
 \begin{equation}
  Z_{\rm red}
  =
  \int_\Theta
  r(\theta,G(\theta))
  \dd\theta
  \label{eq:Z_red_explicit}
 \end{equation}
 satisfy
 \[
  0<Z_{\rm res}<\infty,
  \qquad
  0<Z_{\rm red}<\infty.
 \]
\end{enumerate}
\end{assumption}

\begin{remark}[Compact sufficient conditions]
Assumption~\ref{ass:uniform_residual_coordinates} is stated in dominated-tail form to avoid imposing compactness. A simpler sufficient condition is obtained when \(\Theta\) is compact, \(r\) and \(J_u\) are continuous and bounded on a uniform residual tube, the local diffeomorphisms are uniform in \(\theta\), and the residual is bounded away from zero outside the tube, up to integrable tails in \(u\). The theorem below uses the more explicit dominated-tail formulation because many discretised inverse problems are naturally posed on noncompact parameter or state spaces.
\end{remark}

\section{Penalty limits and the determinant correction}
\label{sec:penalty}

We now derive the limiting parameter posterior induced by a Gaussian residual penalty. For \(\rho>0\), define the naive full-space penalty density
\begin{equation}
 \pi_\rho(\theta,u)
 =
 \frac{1}{Z_\rho}
 r(\theta,u)
 \exp\!\left[-\frac{\rho}{2}\norm{c(\theta,u)}^2\right].
 \label{eq:pi_rho}
\end{equation}
The parameter marginal is
\begin{equation}
 \pi^\theta_\rho(\theta)
 \propto
 \int_U r(\theta,u)
 \exp\!\left[-\frac{\rho}{2}\norm{c(\theta,u)}^2\right]\dd u.
 \label{eq:theta_marginal_penalty}
\end{equation}

\begin{theorem}[Penalty limit]
\label{thm:penalty_limit}
Suppose Assumptions~\ref{ass:state} and \ref{ass:uniform_residual_coordinates} hold, and suppose that the finite-\(\rho\) normalising constants for the densities below are finite and positive for all sufficiently large \(\rho\). Let \(\mu_\rho^\theta\) be the \(\theta\)-marginal of the naive full-space penalty density
\begin{equation}
 \pi_\rho(\theta,u)
 =
 \frac{1}{Z_\rho}
 r(\theta,u)
 \exp\!\left[-\frac{\rho}{2}\norm{c(\theta,u)}^2\right].
 \label{eq:pi_rho_revised}
\end{equation}
Then \(\mu_\rho^\theta\) converges weakly, as \(\rho\to\infty\), to the probability measure on \(\Theta\) with density
\begin{equation}
 \pi^\theta_{\rm res}(\theta)
 =
 \frac{1}{Z_{\rm res}}
 r(\theta,G(\theta))
 J_u(\theta,G(\theta))^{-1}
 =
 \frac{1}{Z_{\rm res}}
 r(\theta,G(\theta))
 \abs{\det\D_u c(\theta,G(\theta))}^{-1}.
 \label{eq:res_limit_density}
\end{equation}

Let \(\widetilde\mu_\rho^\theta\) be the \(\theta\)-marginal of the determinant-corrected finite-penalty density
\begin{equation}
 \widetilde{\pi}_\rho(\theta,u)
 =
 \frac{1}{\widetilde Z_\rho}
 r(\theta,u)
 J_u(\theta,u)
 \exp\!\left[-\frac{\rho}{2}\norm{c(\theta,u)}^2\right].
 \label{eq:corrected_pi_rho}
\end{equation}
Then \(\widetilde\mu_\rho^\theta\) converges weakly to the reduced posterior
\begin{equation}
 \pi_{\rm red}(\theta\given y)
 =
 \frac{1}{Z_{\rm red}}r(\theta,G(\theta)).
 \label{eq:corrected_limit_density_revised}
\end{equation}
All measures and determinants in this theorem are finite-dimensional discretisation-level objects; no infinite-dimensional determinant or mesh-refinement limit is asserted.
\end{theorem}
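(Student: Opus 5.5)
The plan is to prove convergence of unnormalised integrals against bounded continuous test functions, and then take ratios. Fix a bounded continuous $\varphi$ on $\Theta$. For the naive case set
\[
I_\rho(\varphi)=\rho^{q/2}\int_\Theta\int_U \varphi(\theta)\,r(\theta,u)\exp\!\left[-\tfrac{\rho}{2}\norm{c(\theta,u)}^2\right]\dd u\,\dd\theta .
\]
Then $\int\varphi\,\dd\mu^\theta_\rho=I_\rho(\varphi)/I_\rho(1)$. It therefore suffices to show
\[
I_\rho(\varphi)\to(2\pi)^{q/2}\int_\Theta\varphi(\theta)\,r(\theta,G(\theta))\,J_u(\theta,G(\theta))^{-1}\dd\theta
\]
for every such $\varphi$. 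Applying this with $\varphi\equiv1$ gives $(2\pi)^{q/2}Z_{\rm res}\in(0,\infty)$, by item 5 of Assumption~\ref{ass:uniform_residual_coordinates}. The ratio then converges to $\int\varphi\,\pi^\theta_{\rm res}\dd\theta$, which is weak convergence. The corrected case is identical with $r$ replaced by $rJ_u$, and has limit constant $(2\pi)^{q/2}Z_{\rm red}$.

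\textbf{Step 1: split the integral.} I split $I_\rho$ into the contribution from $U\setminus N_\theta$ and the contribution from the tube $\mathcal N$. Since $\abs{\varphi}\le\norm{\varphi}_\infty$, the outside part is bounded by $\norm{\varphi}_\infty$ times the quantity in \eqref{eq:assumption_tail_uncorrected}, or \eqref{eq:assumption_tail_corrected} in the corrected case. So it tends to zero.

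\textbf{Step 2: change variables inside the tube.} For each fixed $\theta$, use item 1 to substitute $u=\psi_\theta(v)$ with $v\in B_\delta(0)$. Then $\dd u=\abs{\det\D\psi_\theta(v)}\dd v$. By the inverse function theorem, $\D\psi_\theta(v)=[\D_u c(\theta,\psi_\theta(v))]^{-1}$, so the Jacobian equals $J_u(\theta,\psi_\theta(v))^{-1}$; this is finite because $J_u>0$ on the tube by item 2. The inner integral becomes
\[
\int_{B_\delta(0)} r(\theta,\psi_\theta(v))\,J_u(\theta,\psi_\theta(v))^{-1}e^{-\rho\norm{v}^2/2}\dd v .
\]
Next rescale $v=z/\sqrt\rho$, which absorbs the factor $\rho^{q/2}$. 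The tube contribution is then
\[
\int_\Theta\int_{\R^q}\varphi(\theta)\,r(\theta,\psi_\theta(z/\sqrt\rho))\,J_u(\theta,\psi_\theta(z/\sqrt\rho))^{-1}e^{-\norm z^2/2}\,\mathbf 1_{\{\norm z<\delta\sqrt\rho\}}\dd z\,\dd\theta .
\]
Measurability of the joint integrand comes from item 2 and the measurability of $\mathcal N$. This justifies Tonelli/Fubini and the use of the fibrewise substitution inside the double integral.

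In the corrected case the factor $J_u$ in the density cancels the Jacobian $J_u^{-1}$. The integrand is then exactly the one dominated in \eqref{eq:assumption_dom_corrected}. This cancellation is the whole point of the correction.

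\textbf{Step 3: pass to the limit.} For almost every $\theta$ and every $z$, we have $z/\sqrt\rho\to0$, and the indicator tends to $1$. Item 2 then gives pointwise convergence of the integrand to $\varphi(\theta)\,r(\theta,G(\theta))\,J_u(\theta,G(\theta))^{-1}e^{-\norm z^2/2}$. The integrand is bounded by $\norm{\varphi}_\infty H_{\rm res}$, or by $\norm{\varphi}_\infty H_{\rm graph}$ in the corrected case, for $\rho\ge\rho_0$. Dominated convergence along any sequence $\rho_n\to\infty$, together with $\int e^{-\norm z^2/2}\dd z=(2\pi)^{q/2}$, yields the claimed limits.

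\textbf{Main obstacle.} I expect the only delicate point to be the bookkeeping of Step 2. The Jacobian of the inverse residual chart must be identified with $J_u^{-1}$ evaluated along $\psi_\theta(v)$, not at $G(\theta)$. The joint measurability needed for Fubini must also be checked. Everything genuinely analytic has been front-loaded into Assumption~\ref{ass:uniform_residual_coordinates}: uniformity of the charts, tails, and domination. Beyond these points, the argument is a direct Laplace-type computation.

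Two remaining points need only a remark. The finite-$\rho$ normalisers are assumed to be finite and positive, so the ratios are well defined. Bounded continuous $\varphi$ suffice for weak convergence on $\Theta$.
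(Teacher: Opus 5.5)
Your proposal is correct and follows essentially the same route as the paper's proof: discard the mass outside the tube via the tail conditions, change to residual coordinates $v=c(\theta,u)$ with $\dd u = J_u(\theta,\psi_\theta(v))^{-1}\dd v$, rescale $z=\sqrt{\rho}\,v$, apply dominated convergence with $H_{\rm res}$ or $H_{\rm graph}$ (the $J_u$ factor cancels in the corrected case), and divide by the normaliser obtained from the constant test function. The differences are cosmetic: you absorb $\rho^{q/2}$ into $I_\rho$ from the start, and you spell out the inverse-function-theorem identification of the chart Jacobian and the sequential form of dominated convergence, both of which the paper leaves implicit.
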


\begin{proof}
It suffices to test the \(\theta\)-marginals against an arbitrary bounded
continuous function \(h:\Theta\to\R\). Define the unnormalised naive marginal
integral
\begin{equation}
 I_\rho(h)
 =
 \int_\Theta h(\theta)
 \int_U
 r(\theta,u)
 \exp\!\left[-\frac{\rho}{2}\norm{c(\theta,u)}^2\right]
 \dd u\,\dd\theta .
 \label{eq:I_rho_h_revised}
\end{equation}
Split \(I_\rho(h)=I_{\rho,{\rm tube}}(h)+I_{\rho,{\rm out}}(h)\), where the tube contribution integrates over \(N_\theta\) and the outside contribution integrates over \(U\setminus N_\theta\). By
\eqref{eq:assumption_tail_uncorrected},
\[
 \rho^{q/2}\abs{I_{\rho,{\rm out}}(h)}
 \le
 \norm{h}_\infty
 \rho^{q/2}
 \int_\Theta\int_{U\setminus N_\theta}
 r(\theta,u)
 \exp\!\left[-\frac{\rho}{2}\norm{c(\theta,u)}^2\right]
 \dd u\,\dd\theta
 \longrightarrow 0 .
\]

On the tube, use the residual coordinate \(v=c(\theta,u)\). By Assumption~\ref{ass:uniform_residual_coordinates}, \(u=\psi_\theta(v)\) and
\[
 \dd u
 =
 J_u(\theta,\psi_\theta(v))^{-1}\dd v .
\]
Therefore
\begin{align}
 I_{\rho,{\rm tube}}(h)
 &=
 \int_\Theta h(\theta)
 \int_{B_\delta(0)}
 r(\theta,\psi_\theta(v))
 J_u(\theta,\psi_\theta(v))^{-1}
 \exp\!\left[-\frac{\rho}{2}\norm{v}^2\right]
 \dd v\,\dd\theta .
 \label{eq:I_rho_tube_v}
\end{align}
Substituting \(z=\sqrt{\rho}\,v\) gives the exact identity
\begin{align}
 \rho^{q/2}I_{\rho,{\rm tube}}(h)
 &=
 \int_\Theta h(\theta)
 \int_{\norm{z}<\delta\sqrt{\rho}}
 r(\theta,\psi_\theta(z/\sqrt{\rho}))
 J_u(\theta,\psi_\theta(z/\sqrt{\rho}))^{-1}
 \exp\!\left[-\frac{1}{2}\norm{z}^2\right]
 \dd z\,\dd\theta .
 \label{eq:I_rho_scaled_z}
\end{align}
For almost every \((\theta,z)\), the integrand in
\eqref{eq:I_rho_scaled_z} converges to
\[
 h(\theta)
 r(\theta,G(\theta))
 J_u(\theta,G(\theta))^{-1}
 \exp\!\left[-\frac{1}{2}\norm{z}^2\right].
\]
The domination condition \eqref{eq:assumption_dom_uncorrected} and boundedness of \(h\) justify dominated convergence. Hence
\begin{align}
 \rho^{q/2}I_\rho(h)
 &\longrightarrow
 \int_\Theta h(\theta)
 r(\theta,G(\theta))
 J_u(\theta,G(\theta))^{-1}
 \dd\theta
 \int_{\R^q}
 \exp\!\left[-\frac{1}{2}\norm{z}^2\right]\dd z \notag\\
 &=
 (2\pi)^{q/2}
 \int_\Theta h(\theta)
 r(\theta,G(\theta))
 J_u(\theta,G(\theta))^{-1}
 \dd\theta .
 \label{eq:I_rho_limit_revised}
\end{align}
Taking \(h\equiv1\) gives
\[
 \rho^{q/2}I_\rho(1)
 \longrightarrow
 (2\pi)^{q/2}Z_{\rm res},
\]
with \(0<Z_{\rm res}<\infty\) by Assumption~\ref{ass:uniform_residual_coordinates}. Therefore
\[
 \int_\Theta h(\theta)\,\mu_\rho^\theta(\dd\theta)
 =
 \frac{I_\rho(h)}{I_\rho(1)}
 \longrightarrow
 \frac{1}{Z_{\rm res}}
 \int_\Theta h(\theta)
 r(\theta,G(\theta))
 J_u(\theta,G(\theta))^{-1}
 \dd\theta ,
\]
which proves weak convergence of the naive penalty marginal to
\eqref{eq:res_limit_density}.

For the corrected density, define
\begin{equation}
 \widetilde I_\rho(h)
 =
 \int_\Theta h(\theta)
 \int_U
 r(\theta,u)J_u(\theta,u)
 \exp\!\left[-\frac{\rho}{2}\norm{c(\theta,u)}^2\right]
 \dd u\,\dd\theta .
 \label{eq:I_tilde_rho_h}
\end{equation}
Again split
\(\widetilde I_\rho(h)=\widetilde I_{\rho,{\rm tube}}(h)+
\widetilde I_{\rho,{\rm out}}(h)\). The outside contribution satisfies
\[
 \rho^{q/2}\abs{\widetilde I_{\rho,{\rm out}}(h)}
 \le
 \norm{h}_\infty
 \rho^{q/2}
 \int_\Theta\int_{U\setminus N_\theta}
 r(\theta,u)J_u(\theta,u)
 \exp\!\left[-\frac{\rho}{2}\norm{c(\theta,u)}^2\right]
 \dd u\,\dd\theta
 \longrightarrow 0
\]
by \eqref{eq:assumption_tail_corrected}.

On the tube, the same change of variables gives
\begin{align}
 \widetilde I_{\rho,{\rm tube}}(h)
 &=
 \int_\Theta h(\theta)
 \int_{B_\delta(0)}
 r(\theta,\psi_\theta(v))
 J_u(\theta,\psi_\theta(v))
 J_u(\theta,\psi_\theta(v))^{-1}
 \exp\!\left[-\frac{\rho}{2}\norm{v}^2\right]
 \dd v\,\dd\theta \notag\\
 &=
 \int_\Theta h(\theta)
 \int_{B_\delta(0)}
 r(\theta,\psi_\theta(v))
 \exp\!\left[-\frac{\rho}{2}\norm{v}^2\right]
 \dd v\,\dd\theta .
 \label{eq:I_tilde_cancel}
\end{align}
Thus the state-Jacobian determinant in the corrected density cancels the Jacobian of the residual-coordinate transformation. With \(z=\sqrt{\rho}\,v\),
\begin{align}
 \rho^{q/2}\widetilde I_{\rho,{\rm tube}}(h)
 &=
 \int_\Theta h(\theta)
 \int_{\norm{z}<\delta\sqrt{\rho}}
 r(\theta,\psi_\theta(z/\sqrt{\rho}))
 \exp\!\left[-\frac{1}{2}\norm{z}^2\right]
 \dd z\,\dd\theta .
 \label{eq:I_tilde_scaled_z}
\end{align}
The integrand converges pointwise almost everywhere to
\[
 h(\theta)r(\theta,G(\theta))
 \exp\!\left[-\frac{1}{2}\norm{z}^2\right],
\]
and \eqref{eq:assumption_dom_corrected} gives the required domination.
Consequently
\begin{align}
 \rho^{q/2}\widetilde I_\rho(h)
 &\longrightarrow
 (2\pi)^{q/2}
 \int_\Theta h(\theta)r(\theta,G(\theta))\dd\theta .
 \label{eq:I_tilde_limit_revised}
\end{align}
Taking \(h\equiv1\) gives
\[
 \rho^{q/2}\widetilde I_\rho(1)
 \longrightarrow
 (2\pi)^{q/2}Z_{\rm red},
\]
with \(0<Z_{\rm red}<\infty\). Therefore
\[
 \int_\Theta h(\theta)\,\widetilde\mu_\rho^\theta(\dd\theta)
 =
 \frac{\widetilde I_\rho(h)}{\widetilde I_\rho(1)}
 \longrightarrow
 \frac{1}{Z_{\rm red}}
 \int_\Theta h(\theta)r(\theta,G(\theta))\dd\theta ,
\]
which proves weak convergence to the reduced posterior
\eqref{eq:corrected_limit_density_revised}.
\end{proof}

\begin{corollary}[Equivalent residuals can give different naive limits]
\label{cor:scaling}
Let \(a:\Theta\to(0,\infty)\) be smooth and define the scalar-rescaled residual
\begin{equation}
 c_a(\theta,u)=a(\theta)c(\theta,u).
\end{equation}
The feasible set \(\{c_a=0\}\) equals \(\{c=0\}\). However, the naive penalty limit associated with \(c_a\) has \(\theta\)-density proportional to
\begin{equation}
 r(\theta,G(\theta))
 a(\theta)^{-q}
 \abs{\det\D_u c(\theta,G(\theta))}^{-1}.
\end{equation}
The determinant-corrected hard-constraint limit is invariant under this rescaling. Indeed,
\[J_{u,a}(\theta,u) = \abs{\det \D_u c_a(\theta,u)} =a(\theta)^q J_u(\theta,u),\]
so applying Theorem~\ref{thm:penalty_limit} to \(c_a\) and the corrected density gives limiting \(\theta\)-density proportional to \(r(\theta,G(\theta))\). At finite \(\rho\), however, the unweighted corrected density is generally not identical to \eqref{eq:corrected_pi_rho}, since it is proportional to
\[ r(\theta,u)\, a(\theta)^qJ_u(\theta,u) \exp\!\left[-\frac{\rho}{2}a(\theta)^2\norm{c(\theta,u)}^2\right].\]
Exact finite-\(\rho\) invariance under algebraic residual transformations is obtained by transforming the residual weight together with the residual, as in Remark~\ref{rem:weighted_rescaling_invariance}.
\end{corollary}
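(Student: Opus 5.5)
The plan is to apply Theorem~\ref{thm:penalty_limit} directly to the rescaled residual \(c_a\), after checking that the rescaled problem still satisfies Assumptions~\ref{ass:state} and \ref{ass:uniform_residual_coordinates}. The only new computation is the behaviour of \(\D_u c_a\).

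The feasible-set claim is immediate: since \(a(\theta)>0\), we have \(c_a(\theta,u)=0\) if and only if \(c(\theta,u)=0\). In particular \(G\) is unchanged. Because \(a\) depends only on \(\theta\), we get \(\D_u c_a=a(\theta)\D_u c\), and hence
\[J_{u,a}=\abs{\det \D_u c_a}=a(\theta)^qJ_u.\]
Assumption~\ref{ass:state} therefore transfers verbatim, since \(c_a\) is \(C^1\) when \(a\) is smooth.

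For Assumption~\ref{ass:uniform_residual_coordinates}, I would build residual coordinates for \(c_a\) from those of \(c\). The map \(u\mapsto c_a(\theta,u)\) is a diffeomorphism from \(N_\theta\) onto \(B_{a(\theta)\delta}(0)\), with inverse \(\psi^a_\theta(w)=\psi_\theta(w/a(\theta))\). To obtain a uniform radius \(\delta_a\), I would assume, or impose as a hypothesis, that \(a\) is bounded below by some \(a_->0\). I would then shrink to \(\delta_a=a_-\delta\) and take \(N^a_\theta=\psi_\theta(B_{\delta_a/a(\theta)}(0))\).

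This transfer is the main obstacle. The tail and domination conditions for \(c_a\) are not automatic from those for \(c\). They hold, for instance, when \(a\) is bounded above and below on \(\Theta\). Substituting \(w=a(\theta)v\) rewrites the rescaled integrals in terms of the original ones with \(\rho\) replaced by \(\rho a(\theta)^2\) and an extra factor \(a(\theta)^{\mp q}\). Bounds on \(a\) then convert the dominating functions \(H_{\rm res},H_{\rm graph}\) into dominating functions for the rescaled problem, after rescaling \(z\) by a bounded factor. In the proof I would state this two-sided bound explicitly as the working hypothesis, since the corollary leaves these conditions implicit.

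Granting the hypotheses, the first part of Theorem~\ref{thm:penalty_limit} applied to \(c_a\) gives a limiting density proportional to
\[r(\theta,G(\theta))J_{u,a}(\theta,G(\theta))^{-1}=r(\theta,G(\theta))\,a(\theta)^{-q}J_u(\theta,G(\theta))^{-1}.\]
This limit differs from the unscaled one whenever \(a\) is nonconstant. The second part of the theorem, with the corrected weight \(J_{u,a}\), yields a limit proportional to \(r(\theta,G(\theta))\), which is independent of \(a\).

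Finally, writing out the corrected density for \(c_a\) at finite \(\rho\) gives the displayed expression
\[r(\theta,u)\,a(\theta)^qJ_u(\theta,u)\exp\!\left[-\frac{\rho}{2}a(\theta)^2\norm{c(\theta,u)}^2\right].\]
Its ratio to \eqref{eq:corrected_pi_rho} is
\[a(\theta)^q\exp\!\left[-\frac{\rho}{2}\bigl(a(\theta)^2-1\bigr)\norm{c(\theta,u)}^2\right],\]
which is not constant in \((\theta,u)\) unless \(a\equiv1\). So the two finite-\(\rho\) densities differ in general even though their hard-constraint limits agree. The closing sentence about weighted residuals is a pointer to the later remark and needs no proof here.
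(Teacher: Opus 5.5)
Your proposal is correct and follows the paper's own argument, which is given inline in the corollary itself. That argument computes $J_{u,a}=a(\theta)^qJ_u$, applies Theorem~\ref{thm:penalty_limit} to $c_a$ in both its naive and determinant-corrected forms, and writes out the finite-$\rho$ corrected density. Your explicit check that Assumption~\ref{ass:uniform_residual_coordinates} transfers to $c_a$ under a two-sided bound on $a$ (shrunken radius $a_-\delta$, dominating functions rescaled by a bounded factor) is extra rigour that the paper leaves implicit, not a different route.
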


\begin{corollary}[Weighted residual penalties]
\label{cor:weighted_residuals}
Let \(R:\Theta\to\R^{q\times q}\) be smooth, symmetric positive definite, with \(\det R(\theta)>0\) for all \(\theta\). Consider the weighted residual penalty
\begin{equation}
 \exp\!\left[
  -\frac{\rho}{2}c(\theta,u)^T R(\theta)c(\theta,u)
 \right].
 \label{eq:weighted_residual_penalty}
\end{equation}
Equivalently, define the whitened residual
\begin{equation}
 w(\theta,u)=R(\theta)^{1/2}c(\theta,u).
 \label{eq:whitened_residual}
\end{equation}
Assume that the hypotheses of Theorem~\ref{thm:penalty_limit} hold with
\(w\) in place of \(c\). On the graph \(c(\theta,G(\theta))=0\),
\begin{equation}
 \D_u w(\theta,G(\theta))
 =
 R(\theta)^{1/2}\D_u c(\theta,G(\theta)),
 \label{eq:Du_weighted_residual}
\end{equation}
and hence
\begin{equation}
 \abs{\det \D_u w(\theta,G(\theta))}
 =
 \det(R(\theta))^{1/2}
 \abs{\det \D_u c(\theta,G(\theta))}.
 \label{eq:weighted_residual_det}
\end{equation}
Therefore the naive weighted residual penalty converges to a \(\theta\)-density
proportional to
\begin{equation}
 r(\theta,G(\theta))
 \abs{\det \D_u c(\theta,G(\theta))}^{-1}
 \det(R(\theta))^{-1/2}.
 \label{eq:weighted_naive_limit}
\end{equation}
The graph-corrected weighted finite-penalty target has log-density addend
\begin{equation}
 \log\abs{\det \D_u c(\theta,u)}
 +\frac{1}{2}\log\det R(\theta)
 -\frac{\rho}{2}c(\theta,u)^T R(\theta)c(\theta,u),
 \label{eq:weighted_corrected_addend}
\end{equation}
in addition to the user's prior and likelihood log density.
Singular or indefinite weights are outside this corollary.
\end{corollary}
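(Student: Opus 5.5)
The plan is to reduce Corollary~\ref{cor:weighted_residuals} to Theorem~\ref{thm:penalty_limit} applied with the whitened residual \(w=R(\theta)^{1/2}c\) in place of \(c\). Since \(R(\theta)\) is symmetric positive definite, \(\norm{w}^2=c^TR(\theta)c\). The weighted penalty \eqref{eq:weighted_residual_penalty} is therefore literally the unweighted Gaussian penalty in the residual \(w\). Moreover, \(R(\theta)^{1/2}\) is invertible, so \(\{w=0\}=\{c=0\}=\Gamma\) and the solution map \(G\) is unchanged. Assumption~\ref{ass:state} holds for \(w\) as soon as \(\D_u w\) is nonsingular on the graph, and this follows from the identity computed next. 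Assumption~\ref{ass:uniform_residual_coordinates} for \(w\) is assumed explicitly in the statement.

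The first step is the Jacobian computation. Because \(R(\theta)^{1/2}\) depends only on \(\theta\), differentiating in \(u\) gives \(\D_u w(\theta,u)=R(\theta)^{1/2}\D_u c(\theta,u)\) at every point, not only on \(\Gamma\). This yields \eqref{eq:Du_weighted_residual}. Taking determinants and using \(\det R^{1/2}=(\det R)^{1/2}>0\) gives \eqref{eq:weighted_residual_det}, again at every \((\theta,u)\). The pointwise version matters for the corrected density at finite \(\rho\), not just in the limit.

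The second step applies the first half of Theorem~\ref{thm:penalty_limit} with \(J_{u}^{w}=\abs{\det\D_u w}\). The naive weighted marginal then converges weakly to a density proportional to \(r(\theta,G(\theta))\,J_u^w(\theta,G(\theta))^{-1}\). Substituting \eqref{eq:weighted_residual_det} gives \eqref{eq:weighted_naive_limit}.

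The third step applies the second half of the theorem. The corrected density built from \(w\) is \(r\,J_u^w\exp[-\tfrac{\rho}{2}\norm{w}^2]\). By the pointwise identity this equals \(r\,\abs{\det\D_u c}\,\det(R)^{1/2}\exp[-\tfrac{\rho}{2}c^TRc]\). Its logarithm is the user's log prior and log likelihood plus exactly the addend \eqref{eq:weighted_corrected_addend}. The theorem then gives weak convergence of its \(\theta\)-marginal to \(\pi_{\rm red}\), which justifies the name graph-corrected.

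The main obstacle is not analytic, since all the limit work is done by Theorem~\ref{thm:penalty_limit}. The one delicate point is the translation of hypotheses. The conditions of Assumption~\ref{ass:uniform_residual_coordinates} refer to \(J_u^w\), not to \(J_u\). The factor \(\det(R(\theta))^{\pm1/2}\) could in principle spoil the domination or normalisation bounds on a noncompact \(\Theta\), for example if \(Z_{\rm res}\) for \(w\) were infinite. This is exactly why the corollary assumes the hypotheses for \(w\) directly rather than for \(c\). I would remark that the two sets of hypotheses are equivalent whenever \(R\) and \(R^{-1}\) are uniformly bounded. Finally, I would note that positive definiteness is used both for the square root and for \(\det R>0\); this explains the exclusion of singular or indefinite weights.
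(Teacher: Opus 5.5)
Your proposal is correct and follows the paper's proof: differentiate \(w=R(\theta)^{1/2}c\) in \(u\), take determinants, and apply both halves of Theorem~\ref{thm:penalty_limit} to the whitened residual, substituting \(\abs{\det\D_u w}=\det(R)^{1/2}\abs{\det\D_u c}\) into the naive limit and into the corrected density. Your observation that this determinant identity holds at every \((\theta,u)\), not only on the graph, is exactly what the paper's final step tacitly uses to obtain the finite-\(\rho\) addend \eqref{eq:weighted_corrected_addend}. Your closing remark that the hypotheses for \(w\) and for \(c\) are equivalent under uniform bounds on \(R\) and \(R^{-1}\) is an unproven aside, and the corollary does not need it.
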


\begin{remark}[Unnormalised penalties versus normalised residual likelihoods]
\label{rem:weighted_normalisation}
Corollary~\ref{cor:weighted_residuals} treats
\[ \exp\!\left[-\frac{\rho}{2}c(\theta,u)^TR(\theta)c(\theta,u)\right]\]
as an unnormalised penalty kernel. If the residual term is instead specified as a normalised Gaussian likelihood with precision \(\rho R(\theta)\), then the likelihood normalisation already contains the factor \(\det R(\theta)^{1/2}\), up to constants independent of \(\theta\). In that case the \(\frac12\log\det R(\theta)\) term should not be added a second time.
\end{remark}

\begin{proof}
Since \(R(\theta)\) does not depend on \(u\), differentiating \(w(\theta,u)=R(\theta)^{1/2}c(\theta,u)\) with respect to \(u\) gives \eqref{eq:Du_weighted_residual}. Taking absolute determinants gives \eqref{eq:weighted_residual_det}. Applying Theorem~\ref{thm:penalty_limit} to the residual coordinate \(w\) gives the naive limit
\[
 r(\theta,G(\theta))
 \abs{\det \D_u w(\theta,G(\theta))}^{-1},
\]
which is exactly \eqref{eq:weighted_naive_limit}. The corrected target for the whitened residual multiplies the weighted penalty by \(\abs{\det \D_u w}\). Using \eqref{eq:weighted_residual_det} yields the log addend \eqref{eq:weighted_corrected_addend}.
\end{proof}

\begin{remark}[Residual-rescaling invariance]
\label{rem:weighted_rescaling_invariance}
Let \(A(\theta)\) be a smooth invertible \(q\times q\) matrix and define
\[
 c_A(\theta,u)=A(\theta)c(\theta,u),
 \qquad
 R_A(\theta)=A(\theta)^{-T}A(\theta)^{-1}.
\]
Then
\begin{equation}
 c_A(\theta,u)^T R_A(\theta)c_A(\theta,u)
 = c(\theta,u)^T c(\theta,u),
 \label{eq:rescaling_penalty_invariance}
\end{equation}
and
\[
 \D_u c_A(\theta,u)=A(\theta)\D_u c(\theta,u).
\]
Moreover
\[
 \det R_A(\theta)=\abs{\det A(\theta)}^{-2},
\]
so the combined graph correction satisfies
\begin{align}
 \log\abs{\det \D_u c_A(\theta,u)}+\frac{1}{2}\log\det R_A(\theta)
 &=
 \log\abs{\det A(\theta)}+\log\abs{\det \D_u c(\theta,u)}-\log\abs{\det A(\theta)} \notag\\
 &=
 \log\abs{\det \D_u c(\theta,u)}.
 \label{eq:combined_rescaling_correction}
\end{align}
Thus the graph-corrected weighted target is invariant under this algebraic residual transformation. This is the mathematical statement implemented by the weighted-residual helper described in Appendix~\ref{app:detcorr_api}.
\end{remark}

\begin{remark}[State-equation determinants]
For a linear state equation \(c(\theta,u)=A(\theta)u-f(\theta)\), the determinant in Theorem~\ref{thm:penalty_limit} is \(\abs{\det A(\theta)}\). Thus the naive penalty limit includes \(\abs{\det A(\theta)}^{-1}\) in the parameter posterior. In large discretised PDEs this term may be expensive and mesh-dependent, but its possible computational cost does not make it mathematically absent. It either belongs to the model, or it must be cancelled by choosing a graph-corrected target or by sampling directly in the reduced parameter space.
\end{remark}

\section{Sampling templates after target specification}
\label{sec:algorithms}

The previous sections specify the finite-dimensional target measures. We now describe how augmented-Lagrangian and splitting ideas can be used without turning residual reduction into an implicit posterior definition. The optimisation primitives are classical in multiplier, augmented-Lagrangian, Douglas--Rachford/ADMM, proximal, and primal-dual methods \cite{Hestenes1969,Rockafellar1974,EcksteinBertsekas1992,BoydParikhChuPeleatoEckstein2011,ParikhBoyd2014,ChambollePock2011}. Here they are used only as proposal geometry, preconditioning, tempering, initialisation, or variational transport. Posterior sampling validity is supplied by an invariant Markov transition, a valid SMC correction, a Metropolis correction on a manifold, or another mechanism that preserves the specified target \cite{Neal2001,Chopin2002,DelMoralDoucetJasra2006, BeskosJasraMuzafferStuart2015}. The templates below are therefore not claims that augmented-Lagrangian residual updates alone sample the posterior.

\subsection{A corrected augmented-Lagrangian path}

For fixed penalty \(\rho>0\) and multiplier \(\lambda\in\R^q\), define an
unnormalised determinant-corrected augmented-Lagrangian density
\begin{equation}
 \gamma_{\rho,\lambda}(\theta,u)
 =
 r(\theta,u)
 \abs{\det\D_u c(\theta,u)}
 \exp\!\left[-\lambda^T c(\theta,u)-\frac{\rho}{2}\norm{c(\theta,u)}^2\right].
 \label{eq:gamma_rho_lambda}
\end{equation}
For fixed \((\rho,\lambda)\), this is an ordinary density on \(\Theta\times U\). If \(\lambda\) is fixed as \(\rho\to\infty\), the term \(-\lambda^Tc\) shifts the centre of the finite-\(\rho\) residual tube but does not change the leading graph-limit marginal in \(\theta\). In residual coordinates \(v=c(\theta,u)\),
\[ -\lambda^Tv-\frac{\rho}{2}\norm{v}^2
 =
 -\frac{\rho}{2}\norm{v+\lambda/\rho}^2
 +\frac{\norm{\lambda}^2}{2\rho},\]
so the shift is \(O(\rho^{-1})\) and the remaining Gaussian factor is
independent of \(\theta\). The same conclusion holds for multiplier schedules
\(\lambda=\lambda_\rho\) with \(\norm{\lambda_\rho}/\rho\to0\), provided the
completed-square factor remains independent of \(\theta\). If multipliers grow
too rapidly with \(\rho\), or if they are made \(\theta\)-dependent, the
graph-limit marginal may change and must be analysed as part of the declared
target. Thus \eqref{eq:gamma_rho_lambda} can be used as an intermediate density
in a tempering or SMC path, or as the invariant density of a Markov kernel at a
given stage. If \((\rho,\lambda)\) is adapted during a computation, the
adaptation must be accounted for by the surrounding SMC or MCMC scheme.

A generic invariant implementation is as follows.

\begin{center}
\fbox{\begin{minipage}{0.94\linewidth}
\textbf{Algorithm 1: Template for determinant-corrected SMC/MCMC with augmented-Lagrangian intermediate densities or proposals}

\smallskip
\textbf{Input:} prior \(\pi_0\), likelihood \(L\), residual \(c\), schedule
\((\rho_k,\lambda_k)_{k=0}^K\), number of particles \(N\), and Markov kernels
\(K_k\) invariant with respect to \(\gamma_{\rho_k,\lambda_k}\).

\begin{enumerate}[leftmargin=*]
 \item Initialise particles \(x_0^{(i)}=(\theta_0^{(i)},u_0^{(i)})\) from an
 initial density \(\eta_0\). Set weights
 \(w_0^{(i)}\propto \gamma_{\rho_0,\lambda_0}(x_0^{(i)})/
 \eta_0(x_0^{(i)})\).
 \item For \(k=0,\ldots,K-1\):
 \begin{enumerate}[leftmargin=*]
  \item Reweight by
  \[
   w_{k+1}^{(i)}\leftarrow
   w_k^{(i)}
   \frac{\gamma_{\rho_{k+1},\lambda_{k+1}}(x_k^{(i)})}
      {\gamma_{\rho_k,\lambda_k}(x_k^{(i)})}.
  \]
  \item Resample if the effective sample size is below a chosen threshold.
  \item Move each particle with one or more steps of an MCMC kernel
  \(K_{k+1}\) that leaves \(\gamma_{\rho_{k+1},\lambda_{k+1}}\) invariant.
  Augmented-Lagrangian, ADMM, Gauss--Newton, stochastic-gradient, or particle
  transport steps may be used inside this move only as proposals unless they
  themselves preserve the target.
 \end{enumerate}
 \item Return the weighted or resampled particles at \(k=K\). Large
 \(\rho_K\) approximates the graph-corrected hard-constraint target; finite
 \(\rho_K\) represents a softened model with the explicitly stated density
 \(\gamma_{\rho_K,\lambda_K}\).
\end{enumerate}
\end{minipage}}
\end{center}

The proposal inside \(K_k\) can be built from an ADMM step, a Gauss--Newton step, a stochastic gradient step, a transport map, or an SVGD particle update. The invariant measure is supplied by the Metropolis--Hastings correction \cite{MetropolisRosenbluthTellerTeller1953,Hastings1970}, by an HMC-type transition \cite{GirolamiCalderhead2011,DuaneKennedyPendletonRoweth1987,Neal2011}, or by another transition that preserves \(\gamma_{\rho_k,\lambda_k}\). For example, a proposal \(q_k(x,\dd x')\) is accepted with probability
\begin{equation}
 \alpha_k(x,x')
 =
 1\wedge
 \frac{\gamma_{\rho_k,\lambda_k}(x')q_k(x',x)}
    {\gamma_{\rho_k,\lambda_k}(x)q_k(x,x')}.
 \label{eq:mh_acceptance}
\end{equation}
This distinction is essential: an augmented-Lagrangian or ADMM step may make an excellent proposal, preconditioner, mutation, or warm start, but posterior correctness is a property of the invariant transition and its declared target density.

\subsection{Direct manifold sampling}

An alternative, related to constrained geometric integration and manifold Monte
Carlo
\cite{ByrneGirolami2013,ZappaHolmesCerfonGoodman2018,GrahamThieryBeskos2022,RyckaertCiccottiBerendsen1977,Andersen1983,HairerLubichWanner2006},
is to sample directly on the constraint manifold
\begin{equation}
 \Gamma=\{(\theta,u):c(\theta,u)=0\}.
\end{equation}
The graph-lifted reduced posterior has surface density
\eqref{eq:graph_surface_density}. A constrained HMC or RATTLE-type method
introduces momentum \(p\) in the tangent space
\begin{equation}
 T_{(\theta,u)}\Gamma
 =
 \{v\in\R^{p+q}:J_c(\theta,u)v=0\},
\end{equation}
simulates Hamiltonian dynamics constrained to \(\Gamma\), projects numerical errors back to the manifold, and applies a Metropolis correction for the target surface density. Augmented-Lagrangian or ADMM curvature can still be used to choose the mass matrix or local preconditioner. The projection and acceptance steps are what preserve the desired measure; reducing the residual alone is not a substitute for this correction.

\begin{center}
\fbox{\begin{minipage}{0.94\linewidth}
\textbf{Algorithm 2: Template for graph-target manifold MCMC with augmented-Lagrangian preconditioning}

\smallskip
\textbf{Input:} feasible initial point \(x_0=(\theta_0,u_0)\in\Gamma\), target
surface density
\[
 \bar\pi_\Gamma(x)
 \propto
 r(x)
 \frac{\abs{\det\D_u c(x)}}{\sdet{J_c(x)J_c(x)^T}},
 \qquad x\in\Gamma,
\]
constraint Jacobian \(J_c\), and a positive definite mass matrix \(M(x)\)
possibly derived from augmented-Lagrangian normal equations.

\begin{enumerate}[leftmargin=*]
 \item Sample momentum \(p\in T_x\Gamma\) from the tangent Gaussian induced by
 \(M(x)\).
 \item Integrate constrained Hamiltonian dynamics using a reversible projection
 scheme.
 \item Project the endpoint to \(\Gamma\) and the momentum to \(T_x\Gamma\).
 \item Accept or reject using the Metropolis rule for \(\bar\pi_\Gamma\) and
 the constrained proposal density.
\end{enumerate}
\end{minipage}}
\end{center}

This approach avoids taking \(\rho\to\infty\), but it requires feasible initialisation, a declared surface density, and reliable projection onto \(\Gamma\). In large PDE inverse problems, these requirements may be demanding; this is precisely where full-space augmented-Lagrangian proposals or tempering paths can be useful.

\subsection{Where particle variational methods fit}

Particle variational methods, including SVGD and recent dual augmented-Lagrangian variants for constrained inverse problems \cite{LiuWang2016,SiahkoohiAghazadeGholami2026,AghazadeSiahkoohiGholami2026}, can be useful in this framework. They can approximate a finite-penalty density, construct a transport proposal, initialise an SMC population, or identify a low-dimensional likelihood-informed subspace. The distinction is terminological and mathematical. A deterministic KL-descent particle flow is a variational approximation to a target density. It becomes an exact or asymptotically exact posterior sampling component only when embedded in a procedure with the appropriate invariant measure or sequential correction, such as a Metropolis correction, a valid SMC sampler, or another exact transition. The determinant correction is orthogonal to the choice of particle or Markov method: it specifies which finite-dimensional target density the method should approximate or preserve.

\section{Analytic example: same feasible set, different posterior}
\label{sec:analytic_example}

The determinant effect is already visible in a scalar nonlinear inverse problem. Let
\begin{equation}
 \theta\sim N(0,1),
 \qquad
 y\given u\sim N(u,\sigma_y^2),
 \qquad
 u=\theta^2.
\end{equation}
The reduced posterior is
\begin{equation}
 \pi_{\rm red}(\theta\given y)
 \propto
 \exp\!\left[-\frac{\theta^2}{2}\right]
 \exp\!\left[-\frac{(y-\theta^2)^2}{2\sigma_y^2}\right].
 \label{eq:scalar_reduced}
\end{equation}
Now represent the same feasible set by the residual
\begin{equation}
 c_a(\theta,u)=a(\theta)(u-\theta^2),
 \qquad
 a(\theta)=\exp(\theta).
 \label{eq:scalar_residual}
\end{equation}
The condition \(c_a=0\) is exactly equivalent to \(u=\theta^2\). However, Theorem~\ref{thm:penalty_limit} gives the naive penalty limit
\begin{equation}
 \pi_{\rm naive}(\theta\given y)
 \propto
 \exp\!\left[-\theta\right]
 \exp\!\left[-\frac{\theta^2}{2}\right]
 \exp\!\left[-\frac{(y-\theta^2)^2}{2\sigma_y^2}\right].
 \label{eq:scalar_naive}
\end{equation}
Thus residual scaling has changed the posterior, despite leaving the physical constraint unchanged. The determinant-corrected penalty multiplies by \(\abs{\D_u c_a}=\exp(\theta)\) and recovers \eqref{eq:scalar_reduced}.

Figure~\ref{fig:analytic_counterexample} expands the scalar example into a four-part diagnostic. Panels~(a)--(c) visualise the mechanism in Theorem~\ref{thm:penalty_limit}: the feasible graph is unchanged, but the volume of a small residual tube changes under residual rescaling, and the state-Jacobian determinant cancels that volume change when the intended target is the graph-lifted reduced posterior. Panel~(d) is the software counterpart of Corollary~\ref{cor:weighted_residuals} and Remark~\ref{rem:weighted_rescaling_invariance}. It compares \(c\) with an algebraically equivalent residual \(A(\theta)c\) using the compensating weight \(R_A(\theta)=A(\theta)^{-T}A(\theta)^{-1}\). The corrected targets agree to roundoff because both the state-Jacobian determinant and the weighted-residual volume term are included. The negative control fails because the correction is deliberately omitted.

\begin{figure}[H]
 \centering
 \includegraphics[width=0.95\linewidth]{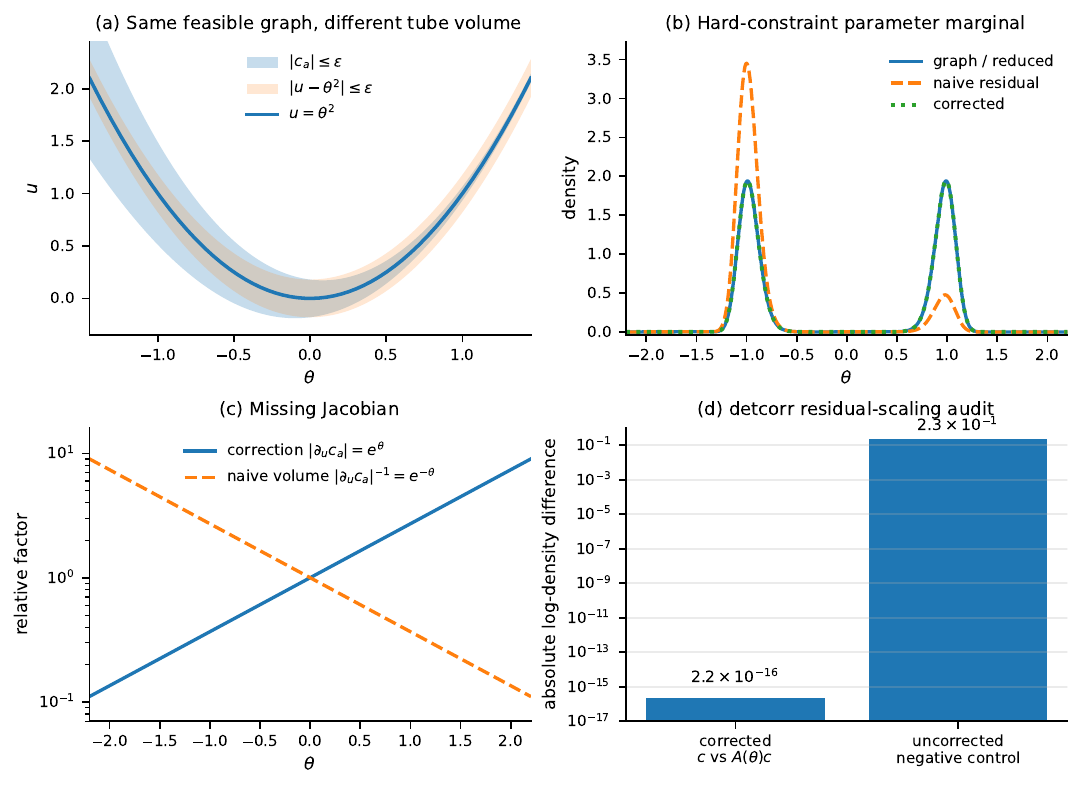}
 \caption{Analytic scalar example and software-level residual-scaling diagnostic. (a) Scaling \(c_a=\exp(\theta)(u-\theta^2)\) leaves the graph unchanged but changes residual-tube volume. (b) The naive hard-constraint limit is reweighted by \(\exp(-\theta)\), while the corrected limit matches the graph posterior. (c) The missing residual-volume factor and compensating determinant correction. (d) The \detcorr{} residual-scaling audit: corrected equivalent targets agree to roundoff because the state-Jacobian and weighted-residual volume corrections are both included, while the uncorrected negative control differs at \(O(10^{-1})\).}
 \label{fig:analytic_counterexample}
\end{figure}

\section{PDE-constrained benchmark protocol}
\label{sec:pde_protocol}

A natural numerical benchmark is a one-dimensional elliptic coefficient inverse problem, a standard setting for PDE-constrained Bayesian inversion and uncertainty quantification \cite{BuiThanhGirolami2014,BuiThanhNguyen2016, FlathWilcoxAkcelikHillVanBloemenWaandersGhattas2011, MartinWilcoxBursteddeGhattas2012,PetraMartinStadlerGhattas2014, CuiMartinMarzoukSolonenSpantini2014,CuiLawMarzouk2016}. Let \(m(x)\) be an unknown log-conductivity field and let \(u(x)\) solve
\begin{equation}
 -\frac{\dd}{\dd x}\left(\exp(m(x))\frac{\dd u}{\dd x}\right)=f(x),
 \qquad x\in(0,1),
 \qquad u(0)=u(1)=0.
 \label{eq:elliptic_pde}
\end{equation}
Noisy observations are
\begin{equation}
 y = H u + \eta,
 \qquad
 \eta\sim N(0,\Gamma_{\rm obs}).
\end{equation}
After finite-element or finite-difference discretisation, the state equation has the form
\begin{equation}
 c(m,u)=A(m)u-f=0.
 \label{eq:discrete_elliptic_constraint}
\end{equation}
For the validation-scale benchmark reported below, we restrict the coefficient to a one-dimensional parametric family
\begin{equation}
 m_\theta(x)=m_{\rm ref}(x)+\theta\phi(x),
 \qquad \theta\in\R,
 \label{eq:scalar_coefficient_family}
\end{equation}
and write \(A(\theta)=A(m_\theta)\). The \(1001\)-point grid in Table~\ref{tab:elliptic_benchmark_summary} is therefore a deterministic grid over the scalar coefficient parameter \(\theta\), not a grid over the full
field \(m(x)\).
The reduced posterior is
\begin{equation}
 \pi_{\rm red}(m\given y)
 \propto
 \pi_0(m)
 \exp\!\left[-\frac{1}{2}\norm{H A(m)^{-1}f-y}_{\Gamma_{\rm obs}^{-1}}^2\right].
 \label{eq:elliptic_reduced_posterior}
\end{equation}
The naive full-space penalty posterior is
\begin{equation}
 \pi_\rho(m,u\given y)
 \propto
 \pi_0(m)
 \exp\!\left[-\frac{1}{2}\norm{Hu-y}_{\Gamma_{\rm obs}^{-1}}^2\right]
 \exp\!\left[-\frac{\rho}{2}\norm{A(m)u-f}^2\right].
 \label{eq:elliptic_naive_penalty}
\end{equation}
Theorem~\ref{thm:penalty_limit} predicts that its limiting \(m\)-marginal is proportional to
\begin{equation}
 \pi_{\rm red}(m\given y)\,\abs{\det A(m)}^{-1}.
 \label{eq:elliptic_naive_limit}
\end{equation}
The graph-corrected penalty is obtained by multiplying \eqref{eq:elliptic_naive_penalty} by \(\abs{\det A(m)}\).

A full sampler benchmark, beyond the deterministic validation check below, should compare:
\begin{enumerate}[leftmargin=*]
 \item a reduced-space reference sampler for \eqref{eq:elliptic_reduced_posterior};
 \item the naive full-space penalty sampler \eqref{eq:elliptic_naive_penalty};
 \item the determinant-corrected full-space sampler;
 \item an augmented-Lagrangian proposal or particle transport used as a warm start or mutation proposal inside an invariant SMC/MCMC scheme.
\end{enumerate}
The primary diagnostics should be posterior marginal agreement with the reduced reference, posterior predictive calibration, effective sample size, split-\(\hat R\), posterior-quantile or simulation-based calibration checks, and the residual norm \(\norm{A(m)u-f}\) \cite{GelmanRubin1992,VehtariGelmanSimpsonCarpenterBurkner2021, CookGelmanRubin2006, ModrakMoonKimBurknerHuurreFaltejskovaGelmanVehtari2025}. Residual convergence alone is not a sufficient diagnostic, because both naive and corrected penalties concentrate on the same feasible set.

\subsection{Code-generated benchmark diagnostics}
\label{subsec:code_generated_benchmark_diagnostics}

The benchmark is implemented in \detcorr{} as a deterministic, validation-scale scalar-parametric problem using \eqref{eq:scalar_coefficient_family}. The reduced route evaluates the induced posterior \(\pi_{\rm red}(\theta\given y)\) on a one-dimensional grid in \(\theta\). The full-space routes analytically integrate over the linear-Gaussian state variable at finite penalty strength \(\rho=10^5\), while sampler-facing pointwise checks use the exact determinant route inside the corrected target helper. The example is deliberately small enough for continuous integration; it is a calibration check for the measure correction in Theorem~\ref{thm:penalty_limit}, not a claim about large-scale PDE solver performance.

Figure~\ref{fig:elliptic_benchmark_routes} and Table~\ref{tab:elliptic_benchmark_summary} connect the finite-dimensional elliptic benchmark directly to Theorem~\ref{thm:penalty_limit}. For the affine state equation \(c(m,u)=A(m)u-f\), the missing volume term in the naive full-space route is \(\abs{\det A(m)}^{-1}\). The determinant-corrected full-space marginal agrees with the reduced posterior to finite-penalty and grid-quadrature tolerance, whereas the naive full-space marginal is shifted by the missing \(\log\abs{\det A(\theta)}\) term. The pointwise \detcorr{} audit evaluated four off-graph states and found maximum absolute log-determinant error below displayed precision. These diagnostics compare posterior marginals and calibration quantities; residual norms are reported separately because feasibility is necessary for a hard-constraint limit but not sufficient for posterior correctness.

\begin{figure}[H]
 \centering
 \includegraphics[width=0.72\linewidth]{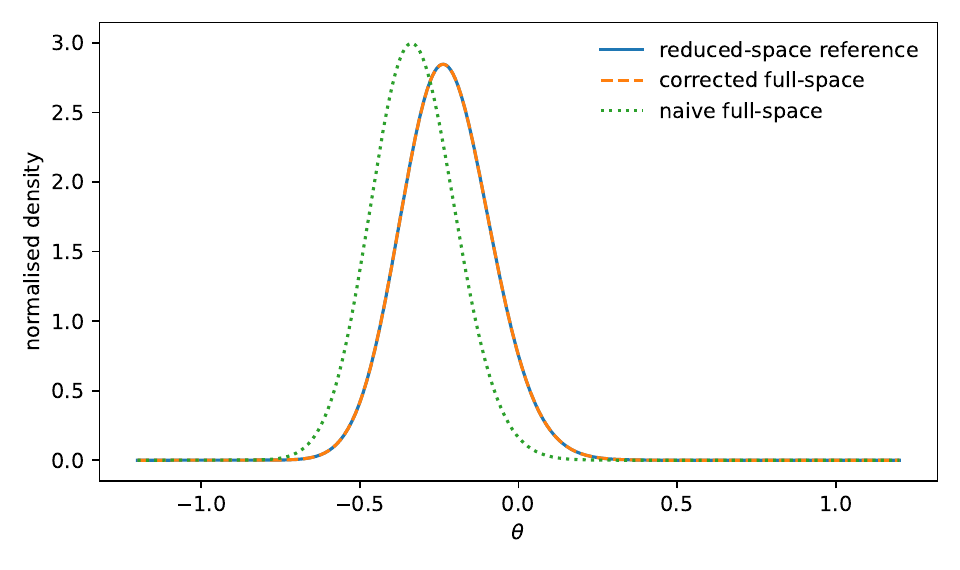}
 \caption{Section~\ref{sec:pde_protocol} validation-scale benchmark generated from the \detcorr{} implementation. The corrected full-space finite-penalty marginal is visually indistinguishable from the reduced-space reference, as predicted by Theorem~\ref{thm:penalty_limit}, while the naive full-space marginal is biased by the missing state-Jacobian volume factor.}
 \label{fig:elliptic_benchmark_routes}
\end{figure}

\begin{table}[H]
\centering
\caption{Section~\ref{sec:pde_protocol} benchmark summaries. All values are computed on a \(1001\)-point grid in the scalar coefficient parameter \(\theta\), with \(q=6\) state variables and \(\rho=10^5\).}
\label{tab:elliptic_benchmark_summary}
\begin{tabular}{lrrrrr}
\toprule
Route & Mean & Variance & 5\% quantile & Median & 95\% quantile\\
\midrule
Reduced space & -0.227033 & 0.019986 & -0.454522 & -0.231716 & 0.009897\\
Corrected full space & -0.227037 & 0.019990 & -0.454556 & -0.231717 & 0.009916\\
Naive full space & -0.329464 & 0.017949 & -0.546571 & -0.333164 & -0.106251\\
\bottomrule
\end{tabular}
\end{table}

\begin{table}[H]
\centering
\caption{Numerical diagnostics generated by \detcorr{} for the algebraic residual-scaling test. The two corrected formulations use the same inverse problem with residuals \(c(\theta,u)\) and \(A(\theta)c(\theta,u)\), including both the state-Jacobian determinant and the compensating weighted-residual volume term from Corollary~\ref{cor:weighted_residuals}.}
\label{tab:residual_scaling_diagnostics}
\begin{tabular}{lr}
\toprule
Diagnostic & Value\\
\midrule
Maximum audit-grid log-density difference & \(2.22\times10^{-16}\)\\
Mean negative-control difference without correction & \(2.28\times10^{-1}\)\\
Maximum retained \(\theta\)-draw difference & \(0\)\\
Maximum retained log-density difference & \(1.78\times10^{-15}\)\\
Acceptance rate, both corrected runs & \(0.32325\)\\
Maximum posterior-summary difference & \(0\)\\
Feasibility/calibration logs separately tagged & true\\
\bottomrule
\end{tabular}
\end{table}

The residual-scaling diagnostic is the practical counterpart of Corollary~\ref{cor:scaling}, Corollary~\ref{cor:weighted_residuals}, and Remark~\ref{rem:weighted_rescaling_invariance}. The benchmark evaluates both residual coordinates on an audit grid and then runs the same seeded validation-scale Metropolis driver. With the graph correction and the weighted-residual factor included, the corrected log densities agree to roundoff and the sampled parameter summaries are identical. The negative control, which omits the correction, deliberately fails the equivalence check. This is the intended software-level guard against confusing small residuals with posterior calibration.

\section{Discussion}
\label{sec:discussion}

The theorem has several implications for Bayesian inverse problems.

First, it separates physical feasibility from posterior correctness. A sequence of samples or particles may satisfy \(\norm{c(\theta,u)}\approx0\) while still approximating the wrong constrained measure. This is not a pathology of a particular algorithm; it is a consequence of changing variables from state coordinates to residual coordinates.

Second, it explains why residual scaling matters. In deterministic optimisation, replacing \(c\) by \(A(\theta)c\) often changes conditioning but not the set of feasible minimisers. In Bayesian computation, the same replacement changes the volume of the residual tube around \(\Gamma\). Unless the target is corrected, the limiting posterior depends on this arbitrary choice.

Third, it gives a constructive role to augmented-Lagrangian and splitting methods. These methods can improve proposals, define tempering paths, and produce excellent variational ensembles. A rigorous sampling algorithm should additionally specify the invariant density at each step and include the determinant correction if the intended target is the graph-lifted reduced posterior.

Fourth, it clarifies the finite-dimensional nature of the statement. In a mesh-refined PDE inverse problem, determinants such as \(\det A(m)\) may diverge, vanish, or require renormalisation. Function-space Bayesian inverse problems are usually formulated with respect to carefully chosen Gaussian or Besov reference measures rather than Lebesgue measure \cite{Stuart2010,CotterRobertsStuartWhite2013, BeskosPinskiSanzSernaStuart2011,HairerStuartVollmer2014,Law2014, BuiThanhNguyen2016}. Therefore, Theorem~\ref{thm:penalty_limit} should not be read as a complete infinite-dimensional result. Instead, it is a discretisation-level warning: the posterior induced by a residual penalty depends on the residual reference measure, and this dependence should be controlled before interpreting full-space particles as samples from a reduced posterior.

Finally, there are situations in which the residual posterior \eqref{eq:residual_surface_density} is the desired model. For example, if \(c(\theta,u)\) represents a physical discrepancy or model-error observation with a specified noise law, as in calibration, ABC/model-error interpretations, or residual-noise physics-constrained generative modelling, then the determinant factor in \eqref{eq:res_limit_density} is part of the intended Bayesian model \cite{KennedyOHagan2001,Wilkinson2013,XuZengPaisleyZhao2026}. The correction \eqref{eq:corrected_pi_rho} is needed only when the goal is to reproduce the reduced posterior \eqref{eq:red_posterior} in a full-space formulation.

\section{Conclusion}
\label{sec:conclusion}

Full-space and augmented-Lagrangian formulations are powerful tools for constrained Bayesian inverse problems, but a hard constraint does not uniquely define a posterior measure. For finite-dimensional discretised state equations \(c(\theta,u)=0\), a naive Gaussian penalty on the residual converges to a zero-noise residual posterior whose parameter marginal is reweighted by \(\abs{\det\D_u c}^{-1}\). This residual posterior is a legitimate target when the residual itself is modelled as a noisy observed quantity. It is not, in general, the reduced Bayesian posterior lifted to the constraint graph.

The graph-lifted reduced posterior is recovered as the hard-constraint limit of a full-space finite-penalty formulation by multiplying by the state-Jacobian determinant \(\abs{\det\D_u c}\), with the additional \(\frac12\log\det R\) term when an unnormalised weighted residual penalty \(c^TRc\) is used. If the residual term is instead included as a normalised Gaussian likelihood, this weight determinant belongs to that likelihood normalisation and should not be counted twice. This correction explains why algebraically equivalent constraints can be optimisation-equivalent but not automatically posterior-equivalent, and why residual-scaling diagnostics are a useful audit of the intended target measure.

The resulting hierarchy is: residual convergence gives feasibility; determinant correction specifies the graph-posterior target; invariant MCMC, valid SMC, or Metropolis-corrected manifold dynamics supply posterior sampling validity; augmented-Lagrangian, ADMM, SVGD, and other particle or splitting methods improve exploration, conditioning, initialisation, or variational approximation but do not by themselves determine the posterior measure. This separation is the main practical message. Full-space and dual-space machinery can be highly effective, provided the target measure is declared first and the sampler is then constructed to preserve or explicitly approximate that target.

\section*{Acknowledgements}
This work was conducted at the Advanced Research Center for Nanolithography, a public-private partnership between the University of Amsterdam (UvA), Vrije Universiteit Amsterdam (VU), Rijksuniversiteit Groningen (RUG), the Netherlands Organization for Scientific Research (NWO), and the semiconductor equipment manufacturer ASML. This work made use of the ARCNL Minerva cluster hosted at HOPSTER at UvA. E.O. is grateful for a WISE Fellowship from the NWO and acknowledges support via Holland High Tech through a public--private partnership in research and development within the Dutch top sector of High-Tech Systems and Materials (HTSM).

\section*{Code and reproducibility}
\label{sec:code_reproducibility}

The numerical diagnostics in Section~\ref{sec:pde_protocol} were generated with \detcorr{} \cite{DetcorrSoftware2026}, a Python package for determinant-corrected constrained posterior calculations. The package is sampler-agnostic: it evaluates \(\log\abs{\det \D_u c}\), weighted-residual volume terms, finite-penalty target addends, and diagnostic records that keep feasibility quantities such as \(\norm{c(\theta,u)}\) separate from calibration quantities such as effective sample size and split-\(\hat R\). The public release is available at \url{https://github.com/Olsson-Materials-Modelling/detcorr} and the archived release is available at \url{10.5281/zenodo.20595686}.

\section*{Author contributions}
\label{sec:author_contributions}

E.O. and J.C. jointly conceived the study, developed the methodological framework, and formulated the determinant-correction argument. J.C. led the software implementation and computational workflow, including the \detcorr{} package interface, benchmark scripts, curation of validation outputs, formal analysis, and visualization. E.O. supervised the project, administered the research programme, secured funding, and guided the overall scientific direction. J.C. prepared the original manuscript draft. E.O. and J.C. jointly validated and interpreted the mathematical and computational results, revised and edited the manuscript, and approved the final version.

\appendix

\section{Surface-density calculation}
\label{app:surface_density}

This appendix verifies \eqref{eq:graph_surface_density}. Let \(\Phi(\theta)=(\theta,G(\theta))\). The Jacobian of \(\Phi\) is
\begin{equation}
 \D\Phi(\theta) =
 \begin{bmatrix}
 I_p\\\D G(\theta)
 \end{bmatrix},
\end{equation}
so
\begin{equation}
 \dd\sigma_\Gamma
 =
 \sdet{\D\Phi(\theta)^T\D\Phi(\theta)}\dd\theta
 =
 \sdet{I_p+\D G(\theta)^T\D G(\theta)}\dd\theta.
\end{equation}
The graph-lifted posterior is
\begin{equation}
 \pi_\Gamma(\Phi(\theta))\dd\sigma_\Gamma
 =
 \frac{1}{Z_{\rm red}}r(\theta,G(\theta))\dd\theta.
\end{equation}
Therefore
\begin{equation}
 \frac{\dd\pi_\Gamma}{\dd\sigma_\Gamma}(\theta,G(\theta))
 =
 \frac{1}{Z_{\rm red}}
 \frac{r(\theta,G(\theta))}{\sdet{I_p+\D G(\theta)^T\D G(\theta)}}.
 \label{eq:graph_surface_appendix_1}
\end{equation}
Since \(\D_\theta c + \D_u c\D G=0\) on \(\Gamma\),
\begin{equation}
 J_cJ_c^T
 =
 \D_\theta c\D_\theta c^T + \D_u c\D_u c^T
 =
 \D_u c\left(I_q + \D G\D G^T\right)\D_u c^T.
\end{equation}
Thus
\begin{equation}
 \sdet{J_cJ_c^T}
 =
 \abs{\det\D_u c}\,
 \sdet{I_q+\D G\D G^T}.
\end{equation}
The nonzero eigenvalues of \(\D G\D G^T\) and \(\D G^T\D G\) coincide, so
\begin{equation}
 \det(I_q+\D G\D G^T)=\det(I_p+\D G^T\D G).
\end{equation}
Substituting this identity into \eqref{eq:graph_surface_appendix_1} gives \eqref{eq:graph_surface_density}.

\section{Software interface for corrected finite-penalty targets}
\label{app:detcorr_api}

This appendix records the minimal \detcorr{} interface used by the validation benchmarks. The package does not choose a posterior measure and does not implement an exact sampler by itself. Instead, it exposes the state-Jacobian correction and packages the resulting log-density terms so that an external MCMC, SMC, manifold, or particle method can use them in its own proposal, weighting, acceptance, or approximation machinery. All sampler-facing helpers require an explicit declaration of the intended target measure.

For a residual function \(c(\theta,u)\), the graph-target finite-penalty log-density contribution is
\begin{equation}
 \log\abs{\det \D_u c(\theta,u)}
 -\frac{\rho}{2}\norm{c(\theta,u)}^2,
 \label{eq:app_unweighted_target_helper}
\end{equation}
plus the user's prior and likelihood log density. The direct helper evaluates both the residual and the determinant correction:
\begin{verbatim}
import detcorr

def c(theta, u):
 return residual_vector(theta, u)

def base(theta, u):
 return log_prior(theta) + log_likelihood(theta, u)

evaluation = detcorr.evaluate_corrected_finite_penalty_target(
 c,
 theta,
 u,
 rho=100.0,
 target_measure="graph",
 base_logdensity=base(theta, u),
 log_correction_kwargs={"backend": "scipy", "singular_policy": "raise"},
)
logpi = evaluation.logdensity
diagnostics = evaluation.sampler_diagnostics()
\end{verbatim}
When the state Jacobian is already available from a PDE code or automatic differentiation, it can be supplied explicitly:
\begin{verbatim}
J_u = Du_c(theta, u)
logdet = detcorr.log_correction(jacobian=J_u, method="sparse_lu")

evaluation = detcorr.corrected_finite_penalty_target(
  c(theta, u),
  rho=100.0,
  logdet=logdet,
  target_measure="graph",
  base_logdensity=base(theta, u),
)
\end{verbatim}
The determinant route can therefore be exact dense linear algebra, sparse LU, Cholesky for symmetric positive definite operators, a structured diagonal or triangular product, an autodiff Jacobian, or a matrix-free estimator with recorded estimator diagnostics, using stochastic trace and stochastic Lanczos ideas where exact factorization is unavailable \cite{Hutchinson1989,UbaruChenSaad2017,MeyerMuscoMuscoWoodruff2021}.

For a weighted residual penalty \(\frac{\rho}{2}c(\theta,u)^T R(\theta)c(\theta,u)\), where \(R(\theta)\) is symmetric positive definite, the software helper implements Corollary~\ref{cor:weighted_residuals}. The corresponding graph-target log-density contribution is
\begin{equation}
 \log\abs{\det \D_u c(\theta,u)}
 + \frac{1}{2}\log\det R(\theta)
 - \frac{\rho}{2} c(\theta,u)^T R(\theta)c(\theta,u).
 \label{eq:app_weighted_target_helper}
\end{equation}
Equation~\eqref{eq:app_weighted_target_helper} is therefore the software form of the main-text weighted-residual correction, not a separate derivation. The weight can be passed as \texttt{R=R(theta)}, as a Cholesky factor, as a preconditioner, or as a precomputed weight log-determinant. Singular, indefinite, or negative-determinant weights are outside Corollary~\ref{cor:weighted_residuals} and should be rejected or handled by an explicitly different model. 

Declaring \texttt{target\_measure="graph"} includes the state-Jacobian correction, and, when a weighted residual is supplied, the \(\frac12\log\det R\) volume term. Declaring \texttt{"residual\_noise"} intentionally omits the graph correction and rejects an accidental state-Jacobian log determinant. This explicit declaration is meant to prevent the software from silently switching between the graph posterior and the residual-noise posterior.

The benchmark scripts use only this interface. The scalar residual-scaling test passes \(c\), \(A(\theta)c\), and the compensating weight \(R_A(\theta)=A(\theta)^{-T}A(\theta)^{-1}\), verifying the invariance in Remark~\ref{rem:weighted_rescaling_invariance}. The elliptic benchmark passes the affine state Jacobian \(A(m)\) and evaluates \(\log\abs{\det A(m)}\) by an exact SPD route. Feasibility diagnostics report residual norms and projection errors; calibration diagnostics report effective sample size, split-\(\hat R\), and simulation-based calibration hooks. The two diagnostic types are intentionally kept separate because residual convergence is necessary for a hard-constraint limit but not sufficient for posterior calibration.

\section{Multiple state solutions}
\label{app:multiple_roots}

If the state equation has finitely many nonsingular solutions \(u=G_j(\theta)\), \(j=1,\ldots,J(\theta)\), the same change-of-variables argument gives the naive penalty limit
\begin{equation}
 \pi^\theta_{\rm res}(\theta)
 \propto
 \sum_{j=1}^{J(\theta)}
 r(\theta,G_j(\theta))
 \abs{\det\D_u c(\theta,G_j(\theta))}^{-1}.
\end{equation}
The determinant-corrected penalty gives
\begin{equation}
 \widetilde\pi^\theta(\theta)
 \propto
 \sum_{j=1}^{J(\theta)}
 r(\theta,G_j(\theta)).
\end{equation}
Which of these is appropriate depends on the intended model over branches. The single-solution case in the main text is the common setting for well-posed discretised state equations.

\bibliographystyle{unsrturl}
\bibliography{references}

@article{Tarantola1984,
  author  = {Albert Tarantola},
  title   = {Inversion of seismic reflection data in the acoustic approximation},
  journal = {Geophysics},
  volume  = {49},
  number  = {8},
  pages   = {1259--1266},
  year    = {1984},
  doi     = {10.1190/1.1441754}
}

@book{Tarantola2005,
  author    = {Albert Tarantola},
  title     = {Inverse Problem Theory and Methods for Model Parameter Estimation},
  publisher = {SIAM},
  year      = {2005},
  doi       = {10.1137/1.9780898717921}
}

@book{KaipioSomersalo2005,
  author    = {Jari Kaipio and Erkki Somersalo},
  title     = {Statistical and Computational Inverse Problems},
  publisher = {Springer},
  year      = {2005},
  doi       = {10.1007/b138659}
}

@article{Stuart2010,
  author  = {Andrew M. Stuart},
  title   = {Inverse problems: a {Bayesian} perspective},
  journal = {Acta Numerica},
  volume  = {19},
  pages   = {451--559},
  year    = {2010},
  doi     = {10.1017/S0962492910000061}
}

@article{CalvettiKaipioSomersalo2014,
  author  = {Daniela Calvetti and Jari Kaipio and Erkki Somersalo},
  title   = {Inverse problems in the {Bayesian} framework},
  journal = {Inverse Problems},
  volume  = {30},
  number  = {11},
  pages   = {110301},
  year    = {2014},
  doi     = {10.1088/0266-5611/30/11/110301}
}

@incollection{DashtiStuart2017,
  author    = {Masoumeh Dashti and Andrew M. Stuart},
  title     = {The {Bayesian} approach to inverse problems},
  booktitle = {Handbook of Uncertainty Quantification},
  pages     = {311--428},
  publisher = {Springer},
  year      = {2017},
  doi       = {10.1007/978-3-319-12385-1_7}
}

@article{VirieuxOperto2009,
  author  = {Jean Virieux and St{\'e}phane Operto},
  title   = {An overview of full-waveform inversion in exploration geophysics},
  journal = {Geophysics},
  volume  = {74},
  number  = {6},
  pages   = {WCC1--WCC26},
  year    = {2009},
  doi     = {10.1190/1.3238367}
}

@article{BuiThanhGirolami2014,
  author  = {Tan Bui-Thanh and Mark Girolami},
  title   = {Solving large-scale {PDE}-constrained {Bayesian} inverse problems with {Riemann} manifold {Hamiltonian} {Monte} {Carlo}},
  journal = {Inverse Problems},
  volume  = {30},
  number  = {11},
  pages   = {114014},
  year    = {2014},
  doi     = {10.1088/0266-5611/30/11/114014}
}

@article{BuiThanhGhattasMartinStadler2013,
  author  = {Tan Bui-Thanh and Omar Ghattas and James Martin and Georg Stadler},
  title   = {A computational framework for infinite-dimensional {Bayesian} inverse problems. {Part I}: The linearized case, with application to global seismic inversion},
  journal = {SIAM Journal on Scientific Computing},
  volume  = {35},
  number  = {6},
  pages   = {A2494--A2523},
  year    = {2013},
  doi     = {10.1137/12089586X}
}

@article{PetraMartinStadlerGhattas2014,
  author  = {Noemi Petra and James Martin and Georg Stadler and Omar Ghattas},
  title   = {A computational framework for infinite-dimensional {Bayesian} inverse problems. {Part II}: Stochastic {Newton} {MCMC} with application to ice sheet flow inverse problems},
  journal = {SIAM Journal on Scientific Computing},
  volume  = {36},
  number  = {4},
  pages   = {A1525--A1555},
  year    = {2014},
  doi     = {10.1137/130934805}
}

@article{BuiThanhNguyen2016,
  author  = {Tan Bui-Thanh and Quoc P. Nguyen},
  title   = {{FEM}-based discretization-invariant {MCMC} methods for {PDE}-constrained {Bayesian} inverse problems},
  journal = {Inverse Problems and Imaging},
  volume  = {10},
  number  = {4},
  pages   = {943--975},
  year    = {2016},
  doi     = {10.3934/ipi.2016028}
}

@article{FlathWilcoxAkcelikHillVanBloemenWaandersGhattas2011,
  author  = {H. P. Flath and L. C. Wilcox and V. Akcelik and J. Hill and B. van Bloemen Waanders and O. Ghattas},
  title   = {Fast algorithms for {Bayesian} uncertainty quantification in large-scale linear inverse problems based on low-rank partial Hessian approximations},
  journal = {SIAM Journal on Scientific Computing},
  volume  = {33},
  number  = {1},
  pages   = {407--432},
  year    = {2011},
  doi     = {10.1137/090780717}
}

@article{MartinWilcoxBursteddeGhattas2012,
  author  = {James Martin and Lucas C. Wilcox and Carsten Burstedde and Omar Ghattas},
  title   = {A stochastic {Newton} {MCMC} method for large-scale statistical inverse problems with application to seismic inversion},
  journal = {SIAM Journal on Scientific Computing},
  volume  = {34},
  number  = {3},
  pages   = {A1460--A1487},
  year    = {2012},
  doi     = {10.1137/110845598}
}

@article{CuiMartinMarzoukSolonenSpantini2014,
  author  = {Tiangang Cui and James Martin and Youssef M. Marzouk and Antti Solonen and Alessio Spantini},
  title   = {Likelihood-informed dimension reduction for nonlinear inverse problems},
  journal = {Inverse Problems},
  volume  = {30},
  number  = {11},
  pages   = {114015},
  year    = {2014},
  doi     = {10.1088/0266-5611/30/11/114015}
}

@article{CuiLawMarzouk2016,
  author  = {Tiangang Cui and Kody J. H. Law and Youssef M. Marzouk},
  title   = {Dimension-independent likelihood-informed {MCMC}},
  journal = {Journal of Computational Physics},
  volume  = {304},
  pages   = {109--137},
  year    = {2016},
  doi     = {10.1016/j.jcp.2015.10.008}
}

@article{HaberAscher2001,
  author  = {Eldad Haber and Uri M. Ascher},
  title   = {Preconditioned all-at-once methods for large, sparse parameter estimation problems},
  journal = {Inverse Problems},
  volume  = {17},
  number  = {6},
  pages   = {1847--1864},
  year    = {2001},
  doi     = {10.1088/0266-5611/17/6/319}
}

@book{HinzePinnauUlbrichUlbrich2009,
  author    = {Michael Hinze and Ren{\'e} Pinnau and Michael Ulbrich and Stefan Ulbrich},
  title     = {Optimization with {PDE} Constraints},
  publisher = {Springer},
  year      = {2009},
  doi       = {10.1007/978-1-4020-8839-1}
}

@book{BorziSchulz2012,
  author    = {Alfio Borz{\`i} and Volker Schulz},
  title     = {Computational Optimization of Systems Governed by Partial Differential Equations},
  publisher = {SIAM},
  year      = {2012},
  doi       = {10.1137/1.9781611972054}
}

@article{BenziGolubLiesen2005,
  author  = {Michele Benzi and Gene H. Golub and J{\"o}rg Liesen},
  title   = {Numerical solution of saddle point problems},
  journal = {Acta Numerica},
  volume  = {14},
  pages   = {1--137},
  year    = {2005},
  doi     = {10.1017/S0962492904000212}
}

@article{vanLeeuwenHerrmann2016,
  author  = {Tristan van Leeuwen and Felix J. Herrmann},
  title   = {A penalty method for {PDE}-constrained optimization in inverse problems},
  journal = {Inverse Problems},
  volume  = {32},
  number  = {1},
  pages   = {015007},
  year    = {2016},
  doi     = {10.1088/0266-5611/32/1/015007}
}

@article{CotterRobertsStuartWhite2013,
  author  = {Simon L. Cotter and Gareth O. Roberts and Andrew M. Stuart and David White},
  title   = {{MCMC} methods for functions: modifying old algorithms to make them faster},
  journal = {Statistical Science},
  volume  = {28},
  number  = {3},
  pages   = {424--446},
  year    = {2013},
  doi     = {10.1214/13-STS421}
}

@article{BeskosPinskiSanzSernaStuart2011,
  author  = {Alexandros Beskos and Frank J. Pinski and J. M. Sanz-Serna and Andrew M. Stuart},
  title   = {Hybrid {Monte} {Carlo} on {Hilbert} spaces},
  journal = {Stochastic Processes and their Applications},
  volume  = {121},
  number  = {10},
  pages   = {2201--2230},
  year    = {2011},
  doi     = {10.1016/j.spa.2011.06.003}
}

@article{HairerStuartVollmer2014,
  author  = {Martin Hairer and Andrew M. Stuart and Sebastian J. Vollmer},
  title   = {Spectral gaps for a {Metropolis}--{Hastings} algorithm in infinite dimensions},
  journal = {Annals of Applied Probability},
  volume  = {24},
  number  = {6},
  pages   = {2455--2490},
  year    = {2014},
  doi     = {10.1214/13-AAP982}
}

@article{Law2014,
  author  = {Kody J. H. Law},
  title   = {Proposals which speed up function-space {MCMC}},
  journal = {Journal of Computational and Applied Mathematics},
  volume  = {262},
  pages   = {127--138},
  year    = {2014},
  doi     = {10.1016/j.cam.2013.07.026}
}

@article{GirolamiCalderhead2011,
  author  = {Mark Girolami and Ben Calderhead},
  title   = {{Riemann} manifold {Langevin} and {Hamiltonian} {Monte} {Carlo} methods},
  journal = {Journal of the Royal Statistical Society: Series B},
  volume  = {73},
  number  = {2},
  pages   = {123--214},
  year    = {2011},
  doi     = {10.1111/j.1467-9868.2010.00765.x}
}

@article{BeskosGirolamiLanFarrellStuart2017,
  author  = {Alexandros Beskos and Mark Girolami and Shiwei Lan and Patrick E. Farrell and Andrew M. Stuart},
  title   = {Geometric {MCMC} for infinite-dimensional inverse problems},
  journal = {Journal of Computational Physics},
  volume  = {335},
  pages   = {327--351},
  year    = {2017},
  doi     = {10.1016/j.jcp.2016.12.041}
}

@article{EcksteinBertsekas1992,
  author  = {Jonathan Eckstein and Dimitri P. Bertsekas},
  title   = {On the {Douglas}--{Rachford} splitting method and the proximal point algorithm for maximal monotone operators},
  journal = {Mathematical Programming},
  volume  = {55},
  pages   = {293--318},
  year    = {1992},
  doi     = {10.1007/BF01581204}
}

@article{BoydParikhChuPeleatoEckstein2011,
  author  = {Stephen Boyd and Neal Parikh and Eric Chu and Borja Peleato and Jonathan Eckstein},
  title   = {Distributed optimization and statistical learning via the alternating direction method of multipliers},
  journal = {Foundations and Trends in Machine Learning},
  volume  = {3},
  number  = {1},
  pages   = {1--122},
  year    = {2011},
  doi     = {10.1561/2200000016}
}

@article{ParikhBoyd2014,
  author  = {Neal Parikh and Stephen Boyd},
  title   = {Proximal algorithms},
  journal = {Foundations and Trends in Optimization},
  volume  = {1},
  number  = {3},
  pages   = {127--239},
  year    = {2014},
  doi     = {10.1561/2400000003}
}

@article{VonoPaulinDoucet2022,
  author  = {Maxime Vono and Daniel Paulin and Arnaud Doucet},
  title   = {Efficient {MCMC} sampling with dimension-free convergence rate using {ADMM}-type splitting},
  journal = {Journal of Machine Learning Research},
  volume  = {23},
  number  = {25},
  pages   = {1--69},
  year    = {2022},
  url     = {https://jmlr.org/papers/v23/20-357.html}
}

@inproceedings{LiuWang2016,
  author    = {Qiang Liu and Dilin Wang},
  title     = {{Stein} variational gradient descent: a general purpose {Bayesian} inference algorithm},
  booktitle = {Advances in Neural Information Processing Systems},
  volume    = {29},
  pages     = {2370--2378},
  year      = {2016},
  url       = {https://proceedings.neurips.cc/paper/2016/hash/b3ba8f1bee1238a2f37603d90b58898d-Abstract.html}
}

@article{JiaLiMeng2022,
  author  = {Junxiong Jia and Peijun Li and Deyu Meng},
  title   = {{Stein} variational gradient descent on infinite-dimensional space and applications to statistical inverse problems},
  journal = {SIAM Journal on Numerical Analysis},
  volume  = {60},
  number  = {4},
  pages   = {2225--2252},
  year    = {2022},
  doi     = {10.1137/21M1440773}
}

@misc{SiahkoohiAghazadeGholami2026,
  author        = {Ali Siahkoohi and Kamal Aghazade and Ali Gholami},
  title         = {Dual-space posterior sampling for {Bayesian} inference in constrained inverse problems},
  year          = {2026},
  archivePrefix = {arXiv},
  eprint        = {2603.00393},
  doi           = {10.48550/arXiv.2603.00393},
  note          = {Preprint arXiv:2603.00393}
}

@misc{AghazadeSiahkoohiGholami2026,
  author        = {Kamal Aghazade and Ali Siahkoohi and Ali Gholami},
  title         = {Scalable {Bayesian} full waveform inversion via dual augmented {Lagrangian} {SVGD}},
  year          = {2026},
  archivePrefix = {arXiv},
  eprint        = {2603.24751},
  doi           = {10.48550/arXiv.2603.24751},
  note          = {Preprint arXiv:2603.24751}
}

@misc{XuZengPaisleyZhao2026,
  author        = {Jian Xu and Delu Zeng and John Paisley and Qibin Zhao},
  title         = {The right measure for physics-constrained generation: a co-area correction for posterior-consistent {PDE} inverse problems},
  year          = {2026},
  doi           = {10.48550/arXiv.2606.04804}
}

@book{EvansGariepy2015,
  author    = {Lawrence C. Evans and Ronald F. Gariepy},
  title     = {Measure Theory and Fine Properties of Functions},
  publisher = {CRC Press},
  edition   = {Revised},
  year      = {2015},
  doi       = {10.1201/b18333}
}

@book{Federer1969,
  author    = {Herbert Federer},
  title     = {Geometric Measure Theory},
  series    = {Grundlehren der mathematischen Wissenschaften},
  volume    = {153},
  publisher = {Springer},
  year      = {1969},
  doi       = {10.1007/978-3-642-62010-2},
  note      = {Reprinted in the Classics in Mathematics series, Springer, 1996}
}

@article{ChangPollard1997,
  author  = {Joseph T. Chang and David Pollard},
  title   = {Conditioning as disintegration},
  journal = {Statistical Science},
  volume  = {51},
  number  = {3},
  pages   = {287--317},
  year    = {1997},
  doi     = {10.1111/1467-9574.00056}
}

@article{Fixman1974,
  author  = {Marshall Fixman},
  title   = {Classical statistical mechanics of constraints: a theorem and application to polymers},
  journal = {Proceedings of the National Academy of Sciences of the United States of America},
  volume  = {71},
  number  = {8},
  pages   = {3050--3053},
  year    = {1974},
  doi     = {10.1073/pnas.71.8.3050}
}

@book{Lee2013,
  author    = {John M. Lee},
  title     = {Introduction to Smooth Manifolds},
  publisher = {Springer},
  edition   = {Second},
  series    = {Graduate Texts in Mathematics},
  volume    = {218},
  year      = {2013},
  doi       = {10.1007/978-1-4419-9982-5}
}

@article{MetropolisRosenbluthTellerTeller1953,
  author  = {Nicholas Metropolis and Arianna W. Rosenbluth and Marshall N. Rosenbluth and Augusta H. Teller and Edward Teller},
  title   = {Equation of state calculations by fast computing machines},
  journal = {Journal of Chemical Physics},
  volume  = {21},
  number  = {6},
  pages   = {1087--1092},
  year    = {1953},
  doi     = {10.1063/1.1699114}
}

@article{Hastings1970,
  author  = {W. Keith Hastings},
  title   = {{Monte} {Carlo} sampling methods using {Markov} chains and their applications},
  journal = {Biometrika},
  volume  = {57},
  number  = {1},
  pages   = {97--109},
  year    = {1970},
  doi     = {10.1093/biomet/57.1.97}
}

@article{DuaneKennedyPendletonRoweth1987,
  author  = {Simon Duane and Anthony D. Kennedy and Brian J. Pendleton and Duncan Roweth},
  title   = {Hybrid {Monte} {Carlo}},
  journal = {Physics Letters B},
  volume  = {195},
  number  = {2},
  pages   = {216--222},
  year    = {1987},
  doi     = {10.1016/0370-2693(87)91197-X}
}

@incollection{Neal2011,
  author    = {Radford M. Neal},
  title     = {{MCMC} using {Hamiltonian} dynamics},
  booktitle = {Handbook of Markov Chain Monte Carlo},
  editor    = {Steve Brooks and Andrew Gelman and Galin Jones and Xiao-Li Meng},
  pages     = {113--162},
  publisher = {CRC Press},
  year      = {2011},
  doi       = {10.1201/b10905-6}
}

@article{Hestenes1969,
  author  = {Magnus R. Hestenes},
  title   = {Multiplier and gradient methods},
  journal = {Journal of Optimization Theory and Applications},
  volume  = {4},
  pages   = {303--320},
  year    = {1969},
  doi     = {10.1007/BF00927673}
}

@article{Rockafellar1974,
  author  = {R. Tyrrell Rockafellar},
  title   = {Augmented {Lagrange} multiplier functions and duality in nonconvex programming},
  journal = {SIAM Journal on Control},
  volume  = {12},
  number  = {2},
  pages   = {268--285},
  year    = {1974},
  doi     = {10.1137/0312021}
}

@article{ChambollePock2011,
  author  = {Antonin Chambolle and Thomas Pock},
  title   = {A first-order primal-dual algorithm for convex problems with applications to imaging},
  journal = {Journal of Mathematical Imaging and Vision},
  volume  = {40},
  pages   = {120--145},
  year    = {2011},
  doi     = {10.1007/s10851-010-0251-1}
}

@article{DelMoralDoucetJasra2006,
  author  = {Pierre Del Moral and Arnaud Doucet and Ajay Jasra},
  title   = {Sequential {Monte} {Carlo} samplers},
  journal = {Journal of the Royal Statistical Society: Series B},
  volume  = {68},
  number  = {3},
  pages   = {411--436},
  year    = {2006},
  doi     = {10.1111/j.1467-9868.2006.00553.x}
}

@article{Chopin2002,
  author  = {Nicolas Chopin},
  title   = {A sequential particle filter method for static models},
  journal = {Biometrika},
  volume  = {89},
  number  = {3},
  pages   = {539--552},
  year    = {2002},
  doi     = {10.1093/biomet/89.3.539}
}

@article{Neal2001,
  author  = {Radford M. Neal},
  title   = {Annealed importance sampling},
  journal = {Statistics and Computing},
  volume  = {11},
  pages   = {125--139},
  year    = {2001},
  doi     = {10.1023/A:1008923215028}
}

@article{BeskosJasraMuzafferStuart2015,
  author  = {Alexandros Beskos and Ajay Jasra and Ege A. Muzaffer and Andrew M. Stuart},
  title   = {Sequential {Monte} {Carlo} methods for {Bayesian} elliptic inverse problems},
  journal = {Statistics and Computing},
  volume  = {25},
  number  = {4},
  pages   = {727--737},
  year    = {2015},
  doi     = {10.1007/s11222-015-9556-7}
}

@incollection{DiaconisHolmesShahshahani2013,
  author    = {Persi Diaconis and Susan Holmes and Mehrdad Shahshahani},
  title     = {Sampling from a manifold},
  booktitle = {Advances in Modern Statistical Theory and Applications: A Festschrift in Honor of Morris L. Eaton},
  series    = {Institute of Mathematical Statistics Collections},
  volume    = {10},
  pages     = {102--125},
  publisher = {Institute of Mathematical Statistics},
  year      = {2013},
  doi       = {10.1214/12-IMSCOLL1006}
}

@article{ByrneGirolami2013,
  author  = {Simon Byrne and Mark Girolami},
  title   = {Geodesic {Monte} {Carlo} on embedded manifolds},
  journal = {Scandinavian Journal of Statistics},
  volume  = {40},
  number  = {4},
  pages   = {825--845},
  year    = {2013},
  doi     = {10.1111/sjos.12036}
}

@article{ZappaHolmesCerfonGoodman2018,
  author  = {Emilio Zappa and Miranda Holmes-Cerfon and Jonathan Goodman},
  title   = {{Monte} {Carlo} on manifolds: sampling densities and integrating functions},
  journal = {Communications on Pure and Applied Mathematics},
  volume  = {71},
  number  = {12},
  pages   = {2609--2647},
  year    = {2018},
  doi     = {10.1002/cpa.21783}
}

@article{LelievreRoussetStoltz2019,
  author  = {Tony Leli{\`e}vre and Mathias Rousset and Gabriel Stoltz},
  title   = {Hybrid {Monte} {Carlo} methods for sampling probability measures on submanifolds},
  journal = {Numerische Mathematik},
  volume  = {143},
  number  = {2},
  pages   = {379--421},
  year    = {2019},
  doi     = {10.1007/s00211-019-01056-4}
}

@article{GrahamThieryBeskos2022,
  author  = {Matthew M. Graham and Alexandre H. Thiery and Alexandros Beskos},
  title   = {Manifold {Markov} chain {Monte} {Carlo} methods for {Bayesian} inference in diffusion models},
  journal = {Journal of the Royal Statistical Society: Series B},
  volume  = {84},
  number  = {4},
  pages   = {1229--1256},
  year    = {2022},
  doi     = {10.1111/rssb.12497}
}

@article{GelmanRubin1992,
  author  = {Andrew Gelman and Donald B. Rubin},
  title   = {Inference from iterative simulation using multiple sequences},
  journal = {Statistical Science},
  volume  = {7},
  number  = {4},
  pages   = {457--472},
  year    = {1992},
  doi     = {10.1214/ss/1177011136}
}

@article{VehtariGelmanSimpsonCarpenterBurkner2021,
  author  = {Aki Vehtari and Andrew Gelman and Daniel Simpson and Bob Carpenter and Paul-Christian B{\"u}rkner},
  title   = {Rank-normalization, folding, and localization: an improved \(\widehat R\) for assessing convergence of {MCMC}},
  journal = {Bayesian Analysis},
  volume  = {16},
  number  = {2},
  pages   = {667--718},
  year    = {2021},
  doi     = {10.1214/20-BA1221}
}

@article{CookGelmanRubin2006,
  author  = {Samantha R. Cook and Andrew Gelman and Donald B. Rubin},
  title   = {Validation of software for {Bayesian} models using posterior quantiles},
  journal = {Journal of Computational and Graphical Statistics},
  volume  = {15},
  number  = {3},
  pages   = {675--692},
  year    = {2006},
  doi     = {10.1198/106186006X136976}
}

@article{ModrakMoonKimBurknerHuurreFaltejskovaGelmanVehtari2025,
  author  = {Martin Modr{\'a}k and Angie H. Moon and Shinyoung Kim and Paul-Christian B{\"u}rkner and Niko Huurre and Kate{\v r}ina Faltejskov{\'a} and Andrew Gelman and Aki Vehtari},
  title   = {Simulation-based calibration checking for {Bayesian} computation: the choice of test quantities shapes sensitivity},
  journal = {Bayesian Analysis},
  volume  = {20},
  number  = {2},
  pages   = {461--488},
  year    = {2025},
  doi     = {10.1214/23-BA1404}
}

@article{KennedyOHagan2001,
  author  = {Marc C. Kennedy and Anthony O'Hagan},
  title   = {{Bayesian} calibration of computer models},
  journal = {Journal of the Royal Statistical Society: Series B},
  volume  = {63},
  number  = {3},
  pages   = {425--464},
  year    = {2001},
  doi     = {10.1111/1467-9868.00294}
}

@article{Wilkinson2013,
  author  = {Richard D. Wilkinson},
  title   = {Approximate {Bayesian} computation ({ABC}) gives exact results under the assumption of model error},
  journal = {Statistical Applications in Genetics and Molecular Biology},
  volume  = {12},
  number  = {2},
  pages   = {129--141},
  year    = {2013},
  doi     = {10.1515/sagmb-2013-0010}
}

@article{RyckaertCiccottiBerendsen1977,
  author  = {Jean-Paul Ryckaert and Giovanni Ciccotti and Herman J. C. Berendsen},
  title   = {Numerical integration of the {Cartesian} equations of motion of a system with constraints: molecular dynamics of n-alkanes},
  journal = {Journal of Computational Physics},
  volume  = {23},
  number  = {3},
  pages   = {327--341},
  year    = {1977},
  doi     = {10.1016/0021-9991(77)90098-5}
}

@article{Andersen1983,
  author  = {Hans C. Andersen},
  title   = {{RATTLE}: A ``velocity'' version of the {SHAKE} algorithm for molecular dynamics calculations},
  journal = {Journal of Computational Physics},
  volume  = {52},
  number  = {1},
  pages   = {24--34},
  year    = {1983},
  doi     = {10.1016/0021-9991(83)90014-1}
}

@book{HairerLubichWanner2006,
  author    = {Ernst Hairer and Christian Lubich and Gerhard Wanner},
  title     = {Geometric Numerical Integration: Structure-Preserving Algorithms for Ordinary Differential Equations},
  publisher = {Springer},
  edition   = {Second},
  series    = {Springer Series in Computational Mathematics},
  volume    = {31},
  year      = {2006},
  doi       = {10.1007/3-540-30666-8}
}

@article{Hutchinson1989,
  author  = {Michael F. Hutchinson},
  title   = {A stochastic estimator of the trace of the influence matrix for {Laplacian} smoothing splines},
  journal = {Communications in Statistics -- Simulation and Computation},
  volume  = {18},
  number  = {3},
  pages   = {1059--1076},
  year    = {1989},
  doi     = {10.1080/03610918908812806}
}

@article{UbaruChenSaad2017,
  author  = {Shashanka Ubaru and Jie Chen and Yousef Saad},
  title   = {Fast estimation of {$\mathrm{tr}(f(A))$} via stochastic {Lanczos} quadrature},
  journal = {SIAM Journal on Matrix Analysis and Applications},
  volume  = {38},
  number  = {4},
  pages   = {1075--1099},
  year    = {2017},
  doi     = {10.1137/16M1104974}
}

@inproceedings{MeyerMuscoMuscoWoodruff2021,
  author    = {Raphael A. Meyer and Cameron Musco and Christopher Musco and David P. Woodruff},
  title     = {{Hutch++}: optimal stochastic trace estimation},
  booktitle = {Proceedings of the 2021 Symposium on Simplicity in Algorithms},
  pages     = {142--155},
  publisher = {SIAM},
  year      = {2021},
  doi       = {10.1137/1.9781611976496.16}
}

@misc{DetcorrSoftware2026,
    author = {{Jonathon Cottom and Emilia Olsson}},
    title        = {{detcorr}: determinant corrections for constrained {Bayesian} inverse problems},
    howpublished = {Python package, version 0.10.0},
    year         = {2026},doi          = {10.5281/zenodo.20595686},
    url          = {https://github.com/Olsson-Materials-Modelling/detcorr},
   }

\end{document}